\definecolor{amethyst}{rgb}{0.6, 0.4, 0.8}
\definecolor{orange}{rgb}{1,0.5,0}
\newtheorem{Algorithm}{Algorithm}[section]
\newtheorem{Theorem}{Theorem}[section]
\newtheorem{Lemma}{Lemma}[section]
\newtheorem{Proposition}{Proposition}[section]
\newtheorem{Remark}{Remark}[section]
\newtheorem{example}{Example}[section]
\newtheorem{Definition}{Definition}[section]
\newtheorem{Corollary}{Corollary}[section]
\newproof{pot}{Proof of Theorem \ref{thm2}}
\newcommand{\bb}{\begin{bmatrix}}
\newcommand{\eb}{\end{bmatrix}}
\newcommand{\bl}[1]{\begin{list}{#1}{\usecounter{bean}}} \newcommand{\el}{\end{list}}
\newcommand{\bel}[1]{\begin{equation} \label{#1}} \newcommand{\eel}{\end{equation}}
\def\r2n2n{\mathbb{R}^{2n\times 2n}}
\def\c2n2n{\mathbb{C}^{2n\times 2n}}
\begin{document}

\date{}
\begin{frontmatter}
\title{
The convergence analysis of an accelerated iteration for solving algebraic Riccati equations
%
}
%

\author{Chun-Yueh Chiang\corref{cor1}\fnref{fn1}}
\ead{chiang@nfu.edu.tw}
\address{Center for General Education, National Formosa
University, Huwei 632, Taiwan.}

\cortext[cor2]{Corresponding author}
\fntext[fn2]{ The second author was supported by the Ministry of Science and Technology of Taiwan under grant 108-2115-M-150-002.}

\date{ }

\begin{abstract}
The discrete-time algebraic Riccati equation (DARE) have extensive applications in optimal control problems. We provide new theoretical supports to the stability properties of solutions to the DARE and reduce the convergence conditions under which the accelerated fixed-point iteration (AFPI) can be applied to compute the numerical solutions of DARE. In particular, we verify that the convergence of AFPI is R-superlinear when the spectral radius of the closed-loop matrix is greater than 1, which is shown by mild assumption and only using primary matrix theories.
Numerical examples are shown to illustrate the consistency and effectiveness of our theoretical results.
\end{abstract}

\begin{keyword}
algebraic Riccati equations,\,discrete-time asymptotically stable,\, discrete-time Lyapunov stable,\,stabilizing  solution,\,minimal solution,\,structure-preserving doubling algorithms,\, \rm{R}-superlinear with order $r$
\MSC 39B12\sep39B42\sep47J22\sep65H05\sep15A24
 \end{keyword}

\end{frontmatter}

\section{Introduction}

The discrete algebraic Riccati equation (DARE) arising in the field of applied mathematics and many classical problems of control theory has been a subject of { study} for a long time, see \cite{Bini2012,Lancaster95,Huang2018} and the references therein.

In this paper, we are concerned with the discrete-time algebraic Riccati equation (DARE)
\begin{align}\label{eq:NMEP}
X=R_{\rm d}(X):= H+A^\ast X (I+ GX)^{-1} A,
\end{align}
where $A\in \mathbb{C}^{n \times n}$, $G$ and $H$ are positive semidefinite matrices of size $n\times n$, and the $n$-square matrix $X$ is the unknown Hermitian matrix that is to be determined. {Here $\ast$ stands the complex conjugate transpose}. We provide new theoretical supports to the stability properties of solutions to the DARE \eqref{eq:NMEP} and new convergence results for an iteration method, proposed recently in \cite{chiang18,chiang20}.

In the past few decades, there have developed fruitful theoretical results as well as a variety of numerical algorithms for the DARE such
as the famous direct method, the Schur method, and some iterative methods including matrix disk function method, matrix sign function method, Newton iterations (NM) and structure-preserving doubling algorithm (SDA), as well as those developed in \cite{Bini2012,Lancaster95,Huang2018}. Among those methods, NM and SDA, both of which converge quadratically, are well-known for their fast convergence behaviour. In contrast to the NM, which requires the computation of the inverse of Fr$\rm\acute{e}$chet derivatives, two kinds of SDA have attracted much interests because of its nice numerical behavior, quadratic convergence rates, low computational costs, and high numerical reliability. See e.g.~\cite{Huang2018}.

Recently, the semigroup property for some binary matrix operations is investigated in \cite{chiang20} and has been applied to the construction of iterations for solving DARE. More precisely, applying the semigroup property to a fixed-point iteration $\mathbb{X}_{k+1}=F(\mathbb{X}_k,\mathbb{X}_1)$, one can obtain an accelerated iteration(AFPI) with at least the R-convergence rate of any desired order $r$. Moreover, this iterative method can be reduced to SDA when $r=2$.

 A Hermitian solution $X$ of Eq.~\eqref{eq:NMEP} is called stabilizing (respectively, almost stabilizing) if all the eigenvalues of the closed-loop matrix $T_{X}:=(I+ GX)^{-1} A$ are in the open (respectively, closed) unit disk.
 The stability property plays important role in the analysis of the convergence behaviour of SDA. In the previous works, the convergence analysis of SDA are concerned with ``the regular case'', that is, $\rho(T_{X})<1$ and ``the critical case'' when $\rho(T_{X})=1$. It has been proved that the SDA has quadratic convergence in the regular case, while for the critical case, the convergence speed is usually linear under some additional assumptions \cite{chiangSIMAX,Chiang2010,Guo1999,WWL2006, Bini2012}. For example, in the critical case,  it was proved in \cite{chiangSIMAX} that the SDA converges linearly to an { almost stabilizing} solution $X_+$ with rate at least $1/2$ if each unimodular eigenvalue of $T_{X_+}$ has a half of the partial multiplicity of  $\mathcal{M}-\lambda \mathcal{L}$, where \begin{align*}
\mathcal{M}-\lambda \mathcal{L}=\bb A & 0 \\ -H & I \eb-\lambda \mathcal{L} \bb I & -G \\ 0 & A^\ast \eb
 \end{align*}
is a matrix pencil associated with the Eq.~\eqref{eq:NMEP}. Note that $\mathcal{M}\bb I \\ X\eb=\mathcal{L}\bb I \\ X\eb T_{X}$ if and only if $X$ is a Hermitian solution of Eq.~\eqref{eq:NMEP}. One contribution in this paper is shown that the SDA still converges quadratically when solving equations of type \eqref{eq:NMEP} even in the critical case.

As mentioned above the convergence assumptions in the critical case are highly related to the distribution of the partial multiplicity of the unimodular eigenvalue of $\mathcal{M}-\lambda \mathcal{L}$, which is very sensitive to perturbations and difficult to compute. Therefore, the convergence criterion is not easy to discriminate due to the ill-posed problem, which must be solved at this end. In the practical implementations it is expected to avoid computing the elementary divisors corresponding to the eigenvalues of $\mathcal{M}-\lambda \mathcal{L}$ on the unit circle. By the way, we are interested in the case where some eigenvalues of $\mathcal{M}-\lambda \mathcal{L}$ are outside the unit circle and we study the convergence behaviors of the SDA when $\rho(T_{X})>1$.

 The main contribution of this paper is to show that, under ceratin conditions, the quadratic convergence of AFPI still holds in the critical case and even $\rho(T_{X_+})>1$. we show that the assumption on the partial multiplicities of unimodular eigenvalue can be reduced to any size, which has not been discussed in the previous works.
This paper is organized as follows. In Section 2, we provide some preliminaries used in solving matrix equations; In Section~3, we describe how to use a congruent transformation in order to reduce the compact structure of Eq.~\eqref{eq:NMEP} and provide a fixed-point iteration with R-linearly convergence to compute the minimal positive definite solution, including but not limited to $\rho(T_X)<1$; An R-superlinearly convergent iterative method with order $r>1$ is discussed and two numerical experiments show that the
reliability and efficiency of the proposed method in Section 4.
  Finally, concluding remarks are given in Section 6.

In the subsequent discussion, { the symbols $\mathbb{C}^{n \times n}$, $\mathbb{H}_{n}$, $\mathbb{N}_{n}$ and $\mathbb{P}_{n}$ stand for the set of $n\times n$ complex matrices, Hermitian matrices, positive semidefinite matrices and positive definite matrices, respectively.} We denote, the open unit disk by $\mathbb{D}$, the closed unit disk by $\bar{\mathbb{D}}$ {and the unit disk by $\mbox{bd}(\mathbb{D})$}, the $m\times m$ identity matrix by $I_m$, the conjugate matrix of $A$ by $\overline{A}$, the conjugate transpose matrix of $A$ by $A^\ast$, the spectrum of $A$ by $\sigma(A)$ and use $\rho(A)$ to denote the spectral radius of a square matrix $A$, { and $\mbox{nullity}(A)$ stands for the dimension of the kernel of a matrix $A$.} We use the symbol $A> 0$ (or $A\geq 0$) to represent that $A$ is a Hermitian positive definite matrix (or a Hermitian positive semidefinite matrix) and {the Loewner order} $A > B$ (or $A\geq B$ ) with two Hermitian matrices $A$ and $B$  if $A - B > 0$ (or $A-B\geq 0$). { We use $[A,B]$ to denote the subset $\{C\in \mathbb H_n| A\leq C\leq B\}$ and use $A\oplus B$ to denote the direct sum of two square matrices $A$ and $B$}. A Hermitian solution $X_{+}$ of Eq.\eqref{eq:NMEP} is called maximal (or minimal) if $X_{+}\geq S$ (or $X_{+}\leq S$) for every Hermitian solution $S$.

\section{Preliminaries}\label{sec:SP}
We start this section by recalling some useful definitions and theoretical results concerning Eq.\eqref{eq:NMEP}.
As one of the most important evaluations of an iterative method,  the speed of convergence is usually measured by the R-order convergence, which is introduced in \cite{Ortega2000,Kelley95,Potra1989,Bini2012}.
\begin{Definition}\label{def:cov}
Given a sequence $\{X_k\}\subseteq\mathbb{C}^{n\times n}$ and an induced matrix norm $\|.\|$, then
 $X_k$ converges \rm{R}-linearly to $X_\star$ if
\begin{align*}
\limsup\limits_{k\rightarrow\infty}\sqrt[k]{\|X_k -  X_\star\|} \leq \sigma, \quad \sigma \in (0,1),
\end{align*}
and $X_k$ converges \rm{R}-superlinearly to $X_\star$ with at least order $r$ if
\begin{align*}
\limsup\limits_{k\rightarrow\infty}\sqrt[r^k]{\|X_k -  X_\star\|}\leq\sigma, \quad \sigma \in (0,1),
\end{align*}
{
where $r>1$ is an integer.
}
\end{Definition}

The following result is a generalization of the completeness of real number.

\begin{Lemma}\label{MCT}~\cite{Bernstein2009}[Proposition 8.6.3]
Let $\{A_i\}_{i=1}^\infty$ be a sequence of positive semidefinite matrices satisfying $A_j\geq A_i \geq 0$ if  $j\geq i$, and assume that $B$ is another positive semidefinite matrix satisfying
$B \geq A_i$ for all $i > 0$. Then,
$A=\lim\limits_{i\rightarrow \infty} A_i$ exists and
$B\geq A \geq 0$.
\end{Lemma}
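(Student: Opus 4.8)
The plan is to reduce this matrix statement to the classical monotone convergence theorem for real sequences by testing against quadratic forms. Fix an arbitrary vector $x\in\mathbb{C}^{n}$ and consider the real sequence $a_i(x):=x^\ast A_i x$. Since $A_j-A_i\geq 0$ whenever $j\geq i$, we have $a_j(x)-a_i(x)=x^\ast(A_j-A_i)x\geq 0$, so $\{a_i(x)\}$ is nondecreasing; since $B-A_i\geq 0$, it is bounded above by $x^\ast Bx$. By the completeness of $\mathbb{R}$, the sequence $a_i(x)$ converges to some limit $q(x)$ with $0\leq q(x)\leq x^\ast Bx$.

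Next I would upgrade this pointwise convergence of quadratic forms to entrywise convergence of the matrices via polarization. Applying the previous step to $x=e_k$ shows that the diagonal entries $(A_i)_{kk}=e_k^\ast A_i e_k$ converge; applying it to $x=e_k+e_l$ and to $x=e_k+\mathrm{i}e_l$ and subtracting off the (already convergent) diagonal contributions shows that $\mathrm{Re}\,(A_i)_{kl}$ and $\mathrm{Im}\,(A_i)_{kl}$ converge for every pair $(k,l)$. Hence the entrywise limit $A:=\lim_{i\to\infty}A_i$ exists as an $n\times n$ matrix, and since all norms on $\mathbb{C}^{n\times n}$ are equivalent, $\|A_i-A\|\to 0$ for the given induced matrix norm as well.

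Finally I would verify the claimed Loewner bounds on $A$. Each $A_i$ is Hermitian, so its entrywise limit $A$ is Hermitian. For every $x$, continuity of $M\mapsto x^\ast Mx$ gives $x^\ast Ax=\lim_i x^\ast A_ix=q(x)$, and the inequalities $0\le q(x)\le x^\ast Bx$ from the first step yield $x^\ast Ax\geq 0$ and $x^\ast(B-A)x\geq 0$ for all $x\in\mathbb{C}^n$, i.e.\ $B\geq A\geq 0$.

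I do not expect a genuine obstacle here: the argument is elementary once one passes to quadratic forms, and monotonicity together with the single uniform upper bound $B$ is exactly what the scalar monotone convergence theorem needs. The only points deserving a line of care are the polarization bookkeeping used to recover the complex off-diagonal entries from real quadratic forms, and the (routine) observation that in finite dimension entrywise convergence coincides with convergence in any induced matrix norm.
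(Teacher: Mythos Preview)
Your argument is correct. Reducing to scalar monotone sequences via quadratic forms, then recovering entries by polarization, is the standard route, and your bookkeeping for the complex off-diagonal entries and the passage from entrywise to norm convergence are both handled properly.

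Note, however, that the paper does not supply its own proof of this lemma: it is quoted directly from \cite{Bernstein2009}[Proposition~8.6.3] as a known background fact, so there is no ``paper's proof'' to compare against. Your write-up would serve perfectly well as a self-contained justification if one were desired.
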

 A matrix operator $f:\mathbb{H}_{n}\rightarrow\mathbb{H}_{n}$ is order preserving (resp. reversing) if $f(A)\geq f(B)$ (resp. $f(A)\leq f(B)$) when $A\geq B$ and $A,B\in\mathbb{H}_{n}$. The first proposition is vital and elementary.

{
\begin{Proposition}\label{pro1}
Under the assumptions on the coefficients $A$, $G$, and $H$,
the matrix operator $R_{\rm d}:\mathbb{N}_n\rightarrow\mathbb{N}_n$ is order preserving on $\mathbb{N}_n$.
\end{Proposition}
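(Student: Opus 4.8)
The plan is to verify three things in turn: that $R_{\rm d}$ is well defined and Hermitian‑valued on $\mathbb{N}_n$; that the asserted monotonicity reduces to a monotonicity statement for the single matrix map $X\mapsto X(I+GX)^{-1}$; and that this last monotonicity follows from the order‑reversing property of matrix inversion, with the singular case handled by a perturbation argument.

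\emph{Well-definedness and Hermitian-valuedness.} For $X\in\mathbb{N}_n$ the product $GX$ is similar to $G^{1/2}XG^{1/2}\ge 0$, hence $\sigma(GX)\subseteq[0,\infty)$ and $I+GX$ is invertible, so $R_{\rm d}(X)$ is defined. From $X(I+GX)=(I+XG)X$ one gets $X(I+GX)^{-1}=(I+XG)^{-1}X$, and since $G$ and $X$ are Hermitian this identity shows $\big(X(I+GX)^{-1}\big)^\ast=X(I+GX)^{-1}$; therefore $R_{\rm d}(X)=H+A^\ast X(I+GX)^{-1}A\in\mathbb{H}_n$.

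\emph{Reduction and the nonsingular case.} If $X,Y\in\mathbb{N}_n$ with $X\ge Y$, then $R_{\rm d}(X)-R_{\rm d}(Y)=A^\ast\big(X(I+GX)^{-1}-Y(I+GY)^{-1}\big)A$, so it suffices to prove $X(I+GX)^{-1}\ge Y(I+GY)^{-1}$. When in addition $X>0$, multiplying out $(X^{-1}+G)X(I+GX)^{-1}=I$ gives the identity $X(I+GX)^{-1}=(X^{-1}+G)^{-1}$ (note $X^{-1}+G>0$). Since inversion is order reversing on $\mathbb{P}_n$, from $X\ge Y>0$ we get $X^{-1}\le Y^{-1}$, hence $X^{-1}+G\le Y^{-1}+G$ in $\mathbb{P}_n$, and a second application of order reversal yields $(X^{-1}+G)^{-1}\ge(Y^{-1}+G)^{-1}$, that is $X(I+GX)^{-1}\ge Y(I+GY)^{-1}$.

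\emph{Singular case and conclusion.} For general $X\ge Y\ge 0$ and $\varepsilon>0$ we have $X+\varepsilon I\ge Y+\varepsilon I>0$, so the nonsingular case gives $R_{\rm d}(X+\varepsilon I)\ge R_{\rm d}(Y+\varepsilon I)$; since $I+G(X+\varepsilon I)$ is invertible for every $\varepsilon\ge 0$, the map $\varepsilon\mapsto R_{\rm d}(X+\varepsilon I)$ is continuous at $0$, so letting $\varepsilon\downarrow 0$ and using that $\mathbb{N}_n$ is closed yields $R_{\rm d}(X)\ge R_{\rm d}(Y)$. In particular $R_{\rm d}(X)\ge R_{\rm d}(0)=H\ge 0$, so $R_{\rm d}$ really maps $\mathbb{N}_n$ into $\mathbb{N}_n$. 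The one genuinely delicate point is that the clean identity $X(I+GX)^{-1}=(X^{-1}+G)^{-1}$ needs $X$ invertible, so the monotonicity proved on $\mathbb{P}_n$ must be transported to $\mathbb{N}_n$ by this limiting device; alternatively one may work with the symmetric form $X(I+GX)^{-1}=X^{1/2}(I+X^{1/2}GX^{1/2})^{-1}X^{1/2}$, valid on all of $\mathbb{N}_n$ and making positivity transparent, but then monotonicity in $X$ is less immediate and a similar approximation is still convenient. The remaining steps are routine bookkeeping with the Loewner order.
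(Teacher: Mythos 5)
Your proof is correct and follows essentially the same route as the paper: reduce to the monotonicity of $X\mapsto X(I+GX)^{-1}$, establish it for positive definite arguments via the identity $X(I+GX)^{-1}=(X^{-1}+G)^{-1}$ and the order-reversing property of inversion, then pass to $\mathbb{N}_n$ by the $\varepsilon$-perturbation and a continuity/limit argument. (Only a cosmetic remark: $GX$ need not be \emph{similar} to $G^{1/2}XG^{1/2}$ when $G$ is singular, but they share nonzero eigenvalues, so your spectral conclusion and hence invertibility of $I+GX$ still stand.)
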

\begin{proof}
We prove the result by showing that
\begin{align}\label{1}
(I_n+AC)^{-1}A\geq(I_n+BC)^{-1}B
\end{align}
if $A\geq B$ for any positive semidefinite matrices $A$,$B$ and $C$ of size $n$.
Let $A_\epsilon:=A+\epsilon I_n$ and $B_\epsilon:=B+\epsilon I_n$ for $\epsilon>0$. It is evident that
\begin{align}\label{2}
(I_n+A_\epsilon C)^{-1}A_\epsilon=(A_\epsilon^{-1}+C)^{-1}\geq(B_\epsilon^{-1}+C)^{-1}=(I_n+B_\epsilon C)^{-1})B_\epsilon,
\end{align}
Take the limit as $\epsilon$ goes to zero from the right on both sides of \eqref{2}, which promptly yields \eqref{1} by continuity argument.

This proposition immediately follows from the below inequality,
\[
R_{\rm d}(X)-R_{\rm d}(Y)=A^\ast[(I_n+XG)^{-1}X-(I_n+YG)^{-1}Y]A\geq 0,
\]
if $X\geq Y$.
\end{proof}
}
Consider the matrix equation
\begin{align}\label{me1}
X=F(X),
\end{align}
where $F$ is a monotone matrix operator on $\mathbb{H}_n$. The following theorem shows the existence of extreme Hermitian solutions of Eq.~\eqref{me1}.
\begin{Theorem}\label{pre}
{
Assume that the matrix operator $F:\mathbb{H}_n\rightarrow\mathbb{H}_n$ is order preserving.} Let
$ S_1:=\{X\in\mathbb{H}_n|X\geq F(X)\}$ and
  $S_2:=\{X\in\mathbb{H}_n|X\leq F(X)\}.$ Consider the fixed-point iteration  defined by
\begin{align*}
X_{k+1}=F(X_k),
\end{align*}
with an initial matrix $X_1\in\mathbb{H}_n$.
Suppose that there is $\widehat{X}_1\in S_1$ and $\widehat{X}_2\in S_2$ such that $\widehat{X}_1\geq \widehat{X}_2$. Then, we have the following statements:
\begin{itemize}
\item[{\rm (1a).}] $F([\widehat{X}_2,\widehat{X}_1])\subseteq [\widehat{X}_2,\widehat{X}_1]$ and there is a $X\in [\widehat{X}_2,\widehat{X}_1]$ solving Eq.~\eqref{me1} if $F$ is continuous on $[\widehat{X}_2,\widehat{X}_1]$.
\item[{\rm (1b)}.] The sequence $\{X_k^-\}$ with $X_1^-=\widehat{X}_2$ is monotonically nondecreasing and converges to a solution $X_\star^-$ of Eq.~\eqref{me1} and $X_\star^-\leq \widehat{X}_1$.
  \item[{\rm (1c)}.] The sequence $\{X_k^+\}$ with $X_1^+=\widehat{X}_1$ is monotonically nonincreasing and converges to a solution $X_\star^+$ of Eq.~\eqref{me1} and $X_\star^+\geq \widehat{X}_2$.

  \item[{\rm (1d)}.] For any positive integer $k$, we have
\[
\widehat{X}_2 \leq X_k^-\leq  X_{k+1}^- \leq  X_\star^- \leq X_\star^+\leq X_{k+1}^+\leq  X_{k}^+\leq \widehat{X}_1.
\]
Furthermore, $X_\star^+$ is the maximal Hermitian solution of \eqref{me1} if $\widehat{X}_1$ is an upper bound of $S_2$, and $X_\star^-$ is the minimal Hermitian solution of \eqref{me1} if $\widehat{X}_2$ is a lower bound of $S_1$.

\end{itemize}
%
\end{Theorem}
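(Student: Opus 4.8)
The plan is to exploit the monotonicity of $F$ together with the completeness-type Lemma~\ref{MCT} to build two monotone sequences trapped between $\widehat X_2$ and $\widehat X_1$, show each converges, identify the limits as fixed points, and finally squeeze everything into the chain in (1d). First I would establish the invariance claim in (1a): if $\widehat X_2 \le X \le \widehat X_1$, then applying the order-preserving $F$ gives $F(\widehat X_2)\le F(X)\le F(\widehat X_1)$; since $\widehat X_2\in S_2$ means $\widehat X_2\le F(\widehat X_2)$ and $\widehat X_1\in S_1$ means $F(\widehat X_1)\le \widehat X_1$, we get $\widehat X_2\le F(X)\le \widehat X_1$, i.e. $F([\widehat X_2,\widehat X_1])\subseteq[\widehat X_2,\widehat X_1]$. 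Existence of a solution in this interval under continuity will then fall out of (1b) or (1c), so I would defer it.

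Next I would treat the sequence $\{X_k^-\}$ with $X_1^-=\widehat X_2$. By induction, using $X_1^-=\widehat X_2\le F(\widehat X_2)=X_2^-$ and applying $F$ repeatedly (order preservation), one gets $X_k^-\le X_{k+1}^-$ for all $k$; also by induction with (1a), every $X_k^-\in[\widehat X_2,\widehat X_1]$, so $X_k^-\le \widehat X_1$. Thus $\{X_k^-\}$ is nondecreasing and bounded above (after shifting by $-\widehat X_2\ge$ something, or directly since the hypotheses of Lemma~\ref{MCT} only need a common upper bound and monotonicity in the Loewner order — I should be slightly careful that Lemma~\ref{MCT} is stated for positive semidefinite matrices, so I would apply it to $X_k^- - \widehat X_2 \ge 0$, which is nondecreasing and bounded above by $\widehat X_1-\widehat X_2\ge 0$). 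Hence $X_\star^-:=\lim_k X_k^-$ exists with $X_\star^-\le\widehat X_1$. Continuity of $F$ on $[\widehat X_2,\widehat X_1]$ then gives $X_\star^- = \lim_k X_{k+1}^- = \lim_k F(X_k^-) = F(X_\star^-)$, so $X_\star^-$ solves \eqref{me1}. The argument for $\{X_k^+\}$ with $X_1^+=\widehat X_1$ is the mirror image: it is nonincreasing, bounded below by $\widehat X_2$, and converges to a fixed point $X_\star^+\ge\widehat X_2$ (apply Lemma~\ref{MCT} to $\widehat X_1-X_k^+\ge 0$).

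For (1d), the inner inequality $X_\star^-\le X_\star^+$ needs a short separate argument: both limits lie in $[\widehat X_2,\widehat X_1]$, and I would show by induction that $X_k^-\le X_\star^+$ for every $k$ — indeed $X_1^-=\widehat X_2\le X_\star^+$, and if $X_k^-\le X_\star^+$ then $X_{k+1}^- = F(X_k^-)\le F(X_\star^+)=X_\star^+$ — then let $k\to\infty$. Combining this with the monotonicity of the two sequences and the bounds already established yields the full chain $\widehat X_2\le X_k^-\le X_{k+1}^-\le X_\star^-\le X_\star^+\le X_{k+1}^+\le X_k^+\le\widehat X_1$. The extremality statements then follow: if $\widehat X_1$ is an upper bound of $S_2$, then for any Hermitian solution $S$ of \eqref{me1} we have $S\le F(S)=S$ so $S\in S_2$, hence $S\le\widehat X_1$; but then $S\in[\widehat X_2,\widehat X_1]$ is not immediate (we'd need $S\ge\widehat X_2$), so instead I would run the monotone iteration from $X_1^+=\widehat X_1$: since $S=F(S)\le F(\widehat X_1)\le\widehat X_1$ is not quite what's needed either — the cleaner route is to note $S\le \widehat X_1 = X_1^+$, apply $F$ repeatedly to get $S = F^{(k)}(S)\le F^{(k)}(\widehat X_1) = X_k^+$ for all $k$, and pass to the limit to obtain $S\le X_\star^+$, so $X_\star^+$ is maximal. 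Symmetrically, if $\widehat X_2$ is a lower bound of $S_1$, any solution $S$ satisfies $S\in S_1$, so $\widehat X_2\le S$, whence $X_k^-=F^{(k-1)}(\widehat X_2)\le F^{(k-1)}(S)=S$ and $X_\star^-\le S$, giving minimality.

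The main obstacle I anticipate is bookkeeping rather than conceptual: making sure each application of Lemma~\ref{MCT} is to a genuinely positive-semidefinite, monotone, bounded sequence (hence the shifts by $\widehat X_2$ or $\widehat X_1$), and being careful in the extremality argument that an arbitrary solution $S$ need only be comparable to one endpoint — using the one-sided iteration $F^{(k)}$ to propagate that single comparison to the limit — rather than assuming $S$ already lies in $[\widehat X_2,\widehat X_1]$.
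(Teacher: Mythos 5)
Your proposal is correct and follows essentially the same route as the paper: monotone sequences driven by order preservation, convergence via Lemma~\ref{MCT} (your shift by $\widehat{X}_2$ or $\widehat{X}_1$ is a careful way of meeting its positive-semidefiniteness hypothesis), the chain in (1d) by comparing iterates, and extremality by propagating a one-sided comparison through $F^{(k)}$ and passing to the limit. The only minor divergence is in (1a), where the paper invokes the Brouwer fixed point theorem on the invariant set $[\widehat{X}_2,\widehat{X}_1]$ while you obtain existence from the limits in (1b)/(1c); both use the same continuity hypothesis, so this is an equivalent alternative rather than a gap.
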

\begin{proof}
Concerning part (1a), since $F$ is order preserving, it can be easily seen that {$F(X)\in[F(\widehat{X}_2),F(\widehat{X}_1)]\subseteq [\widehat{X}_2,\widehat{X}_1]$} for any {$X\in[\widehat{X}_2,\widehat{X}_1]$}. If $F$ is continuous, then $F$ has a fixed point in {$[\widehat{X}_2,\widehat{X}_1]$} from the Brouwer fixed point theorem.

Parts (1b) and (1c) can be proved easily by an induction on $k$. Concerning part (1b), we have $\widehat{X}_1-X_1^-=\widehat{X}_1-\widehat{X}_2\geq 0$ and ${X}_2^- -X_1^-=F(\widehat{X}_2)-\widehat{X}_2\geq 0$. From the inductive assumption
  $\widehat{X}_1-X_k^-\geq 0$ and ${X}_{k+1}^- -X_{k}^-\geq 0$,  we derive the following inequalities
\begin{align*}
  \widehat{X}_1-X_{k+1}^-&\geq F(\widehat{X}_1)-F(X_k^-)\geq 0, \\
  {X}_{k+2}^- -X_{k+1}^- &= F({X}_{k+1}^-)-F(X_{k}^-)\geq 0,
\end{align*}
which completes the induction process. Part (1c) can be proved analogously.

For the inequalities in part (1d), we only need to prove $X_\star^-\leq X_\star^+$ since the rest is a direct consequence of parts (1b) and (1c). Observe that $\widehat{X}_2\leq \widehat{X}_1$ so that $F^{(k)}(\widehat{X}_2)\leq F^{(k)}(\widehat{X}_1)$, which implies $X_\star^-=\lim\limits_{k\rightarrow\infty} F^{(k)}(\widehat{X}_2)\leq \lim\limits_{k\rightarrow\infty} F^{(k)}(\widehat{X}_1)= X_\star^+$.

For the rest of the statement, it is easily seen that $S_2$ contains all Hermitian solution of Eq.~\eqref{me1}.
{ Let $X\in S_2$. Observe that $X_1^+=\widehat{X}_1\geq X$ and
$X_{k+1}^+=F(X_{k}^+)\geq F(X)\geq X$ if $X_{k}^+\geq X$ for any integer $k\geq 1$. One may easily prove by induction that $X_{k}^+\geq X$ for all integer $k\geq 1$. Thus, $X_{\star}^+=\lim\limits_{k\rightarrow\infty}X_{k}^+\geq X$ with all Hermitian solution $X$ of Eq.~\eqref{me1}.
}
 The same argument is applied to the proof of the minimal Hermitian solution $X_\star^-$.

%
\end{proof}
As previously mentioned,  the convergence behavior of  SDA for solving DARE \eqref{eq:NMEP} is highly related to the distance between the unit circle and the spectral radius of the closed-loop matrix, which is characterized by the stability properties of the solution of equation \eqref{eq:NMEP}.

A useful tool in the estimation of $\rho(T_{X})$ is the inertia theorems for Stein matrix equation (SME). In stability theory and its applications many results for SME has attracted great attentions for its theoretical and practical significance in systems control~\cite{Lancaster95}. Let the Stein matrix operator $S_A:\mathbb{N}_n\rightarrow\mathbb{H}_n$ associated with a matrix $A\in\mathbb{C}^{n\times n}$ be defined by
\begin{align}\label{SME}
S_A(X):= X-A^\ast X A,
\end{align}
for any $X\in\mathbb{N}_n$. In general, the operator $S_A$ is neither order preserving nor order reversing. However, under the assumption that $\rho(A)<1$
the operator $S^{-1}_A:\mathbb{N}_n\rightarrow\mathbb{N}_n$ exists, and $S^{-1}_A$ is order preserving since $S_A^{-1}(X)=\sum\limits_{k=0}^\infty (A^k)^\ast X A^k \geq \sum\limits_{k=0}^\infty (A^k)^\ast Y A^k=S_A^{-1}(Y)$ for $X\geq Y$.

 In the rest of the section, we propose the stability properties of a discrete-time Lyapunov operator and equation. We begin with the definitions of Lyapunov stability and asymptotic stability of a matrix.
\begin{Definition}
\cite[Definition 11.8.1.]{Bernstein2009}
Let $A\in\mathbb{C}^{n\times n}$. The notation ${\rm ind}_\lambda(A)$  denotes the size of the largest Jordan block corresponding the eigenvalue $\lambda$ of $A$. Furthermore,
\begin{enumerate}
   \item A is discrete-time asymptotically stable if $\rho(A)< 1$.
  \item A is discrete-time Lyapunov stable if $\rho(A)\leq 1$ and ${\rm ind}_\lambda(A)=1$ for all unimodular eigenvalues of $A$.
\end{enumerate}
\end{Definition}

In analogy with~\cite{Bernstein2009}[Proposition 11.10.5], we have the following similar result, the proof can be found in the Appendix.
\begin{Lemma}\label{op}
Let $S_A:\mathbb{N}_n\rightarrow\mathbb{H}_n$ be the Stein matrix operator defined in \eqref{SME}. Then, we have the following statements:
\begin{enumerate}
 \item there exists a matrix $X_0\in\mathbb{P}_n$ such that $S_A(X_0)> 0$  if and only if $A$ is discrete-time asymptotically stable.
  \item there exists a matrix $X_0\in\mathbb{P}_n$ such that $S_A(X_0)\geq 0$ if and only if $A$ is discrete-time Lyapunov stable. Furthermore, the numbers of unimodular eigenvalues (counting multiplicities) is the {nullity of $S_A(X_0)$}.
\end{enumerate}
\end{Lemma}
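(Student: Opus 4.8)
The plan is to prove both statements by reducing to the classical discrete-time Lyapunov theory via the identity $S_A^{-1}(X)=\sum_{k=0}^\infty (A^k)^\ast X A^k$ in the convergent case, together with a Jordan-form analysis when $\rho(A)=1$. For part (1), the ``if'' direction is immediate: if $\rho(A)<1$ then $X_0:=\sum_{k=0}^\infty (A^k)^\ast A^k$ converges, lies in $\mathbb{P}_n$, and satisfies $S_A(X_0)=I>0$. For the ``only if'' direction, suppose $S_A(X_0)=:Q>0$ with $X_0\in\mathbb{P}_n$. Take any eigenpair $Av=\lambda v$, $v\neq0$; then $v^\ast Q v = (1-|\lambda|^2)\, v^\ast X_0 v$, and since $v^\ast Qv>0$ and $v^\ast X_0 v>0$ we get $|\lambda|<1$, hence $\rho(A)<1$.

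For part (2), the ``if'' direction again uses a construction: when $A$ is discrete-time Lyapunov stable, decompose $\mathbb{C}^n = V_1\oplus V_2$ into the $A$-invariant subspace $V_1$ spanned by generalized eigenvectors for $|\lambda|<1$ and the subspace $V_2$ for the unimodular eigenvalues, which (by the ${\rm ind}_\lambda(A)=1$ hypothesis) consists of genuine eigenvectors so that $A|_{V_2}$ is diagonalizable with unitary spectrum. Choosing an inner product in which $V_1\perp V_2$, set $X_0 = P_1^\ast\big(\sum_{k\ge0}(A_1^k)^\ast A_1^k\big)P_1 + P_2^\ast W P_2$ where $A_1=A|_{V_1}$, $P_i$ are the projections, and $W>0$ is chosen on $V_2$ so that $S_{A|_{V_2}}(W)=0$ (possible since $A|_{V_2}$ is similar to a unitary matrix). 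Then $S_A(X_0)\ge0$ with kernel exactly $V_2$, giving both the semidefiniteness and the nullity count. For the ``only if'' direction, assume $S_A(X_0)=Q\ge0$, $X_0\in\mathbb{P}_n$. The eigenvalue bound $|\lambda|\le1$ follows as in part (1) from $v^\ast Qv=(1-|\lambda|^2)v^\ast X_0v\ge0$. It remains to rule out Jordan blocks of size $\ge2$ at unimodular eigenvalues: if $Av_1=\lambda v_1$ and $Av_2=\lambda v_2+v_1$ with $|\lambda|=1$, a direct computation of $v_1^\ast Q v_1$, $v_1^\ast Q v_2$ from $S_A(X_0)=Q$ forces $v_1^\ast X_0 v_1 = 0$, contradicting $X_0\in\mathbb{P}_n$; this shows ${\rm ind}_\lambda(A)=1$ on the unit circle.

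For the nullity statement in the ``only if'' direction, I would argue that $v\in\ker Q$ for $v$ in the unimodular generalized eigenspace (already shown to be an eigenspace), while conversely if $Qv=0$ then writing things in the eigenbasis-plus-Jordan decomposition and using $X_0>0$ shows $v$ has no component outside the unimodular eigenspace; hence $\ker S_A(X_0)$ equals that eigenspace and its dimension is the number of unimodular eigenvalues counted with multiplicity. The main obstacle is the bookkeeping in the ``only if'' direction of part (2): carefully extracting from the single matrix identity $X_0-A^\ast X_0A=Q\ge0$ the facts that (a) no nontrivial Jordan chains survive on $\mathrm{bd}(\mathbb{D})$ and (b) the kernel of $Q$ is precisely the unimodular eigenspace — this requires choosing a convenient basis (e.g.\ putting $A$ in Jordan form and tracking the block structure of $X_0$) and exploiting positive definiteness of $X_0$ repeatedly, rather than any single slick inequality. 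Since the excerpt defers the proof to the Appendix, I expect the paper handles exactly this bookkeeping there.
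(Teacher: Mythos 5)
Your two ``if and only if'' claims are proved correctly, and in the forward direction of part 2 you take a genuinely different route from the paper: you rule out unimodular Jordan chains by the direct computation $v_1^\ast Q v_1=(1-|\lambda|^2)v_1^\ast X_0v_1=0$, hence $Qv_1=0$ because $Q\geq 0$ (make this use of semidefiniteness explicit), and then $0=v_2^\ast Qv_1=-\lambda\, v_1^\ast X_0v_1$, contradicting $X_0>0$. The paper instead performs a simultaneous congruence $S^\ast X_0S=I_n$, $S^\ast QS=Q_r\oplus 0_{n-r}$, rewrites the Stein identity as $\widehat A^\ast\widehat A=(I_r-Q_r)\oplus I_{n-r}\leq I_n$ with $\widehat A=S^{-1}AS$, and quotes the fact that a semicontractive matrix is discrete-time Lyapunov stable. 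Your version is self-contained and more elementary; the paper's version gets the spectral conclusion in one stroke from a cited result. Your part 1 and your construction in the ``if'' direction of part 2 (Neumann series on the stable invariant subspace, a $W>0$ with $S_{A|_{V_2}}(W)=0$ on the semisimple unimodular part) are essentially the appendix's argument in different clothing (the paper works with the Jordan form and solves the block Stein equations explicitly).

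There is, however, a genuine gap in your treatment of the ``furthermore'' statement. You claim that for any $X_0\in\mathbb{P}_n$ with $Q:=S_A(X_0)\geq 0$ the kernel of $Q$ is exactly the unimodular eigenspace, and propose to prove the inclusion $\ker Q\subseteq\{\text{unimodular eigenvectors}\}$ ``using $X_0>0$''. Only the other inclusion is true in general. Counterexample: $A=\begin{pmatrix}0&1\\0&0\end{pmatrix}$, $X_0=I_2$ gives $S_A(X_0)=\mathrm{diag}(1,0)\geq 0$ with nullity one, while $A$ has no unimodular eigenvalues; here $v=e_2\in\ker Q$ has no component in the (trivial) unimodular eigenspace, so no bookkeeping in a Jordan basis can rescue the claimed equality. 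What is provable for arbitrary admissible $X_0$ is only $\mbox{nullity}(S_A(X_0))\geq$ the number of unimodular eigenvalues. The paper sidesteps this issue: its appendix verifies the nullity count only for the particular $X_0$ it constructs in the ``if'' direction, where $S_A(X_0)$ is congruent to $I_k\oplus 0_{n-k}$ by construction. To be consistent with what can actually be proved (and with what the paper proves), you should either state the equality only for a suitably chosen $X_0$, or weaken your ``only if'' claim to the inequality.
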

As a consequence of Lemma \ref{op}, we have
\begin{Corollary}\label{DLMEP}
For a matrix $A\in \mathbb{C}^{n\times n}$, $A$ is discrete-time asymptotically (or Lyapunov) stable if exactly one of the following possibilities occurs.
\begin{subequations}
\begin{enumerate}
  \item Let the Stein matrix equation with sign ``-'' be defined by
\begin{align}\label{CS11}
X-A^\ast X A=Q,
\end{align}
where $A\in\mathbb{C}^{n\times n}$ and $Q>0$ (or $Q\geq0$). Assume that there exists a solution $X\in\mathbb{P}_n$ solving \eqref{CS11}.
  \item Let the Stein matrix equation with sign ``+'' be defined by
\begin{align}\label{CS12}
X+A^\ast X A=Q,
\end{align}
where $A\in\mathbb{C}^{n\times n}$ and $Q>0$. Assume that $Q> A^\ast Q A$ (or $Q\geq A^\ast Q A$) and there exists a solution $X\in\mathbb{P}_n$ solving \eqref{CS12}.
\end{enumerate}
\end{subequations}
\end{Corollary}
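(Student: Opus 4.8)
The plan is to reduce each case to the relevant part of Lemma \ref{op} by exhibiting a matrix $X_0\in\mathbb{P}_n$ with $S_A(X_0)\geq 0$ (or $>0$), where $S_A$ is the Stein operator of \eqref{SME}. For case 1, the hypothesis is immediate: if $X\in\mathbb{P}_n$ solves \eqref{CS11} with $Q>0$ (resp.\ $Q\geq 0$), then $S_A(X)=X-A^\ast X A=Q>0$ (resp.\ $\geq 0$), so we may take $X_0=X$ and Lemma \ref{op}(1) (resp.\ (2)) gives that $A$ is discrete-time asymptotically (resp.\ Lyapunov) stable. This case needs essentially no work.

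The substance is in case 2, the ``$+$'' equation \eqref{CS12}. Here I would first note that since $Q>0$ we have $Q\geq A^\ast Q A\geq 0$ in both sub-cases, hence $\rho(A)\le 1$ already (one can see this by iterating: $A^{\ast k}Q A^k$ is nonincreasing and bounded below by $0$, which forces $A^k$ to stay bounded, giving $\rho(A)\le1$; or invoke Lemma \ref{op}(2) applied to $Q$ itself). The key algebraic step is to build from the solution $X$ of \eqref{CS12} a matrix on which $S_A$ is nonnegative. Starting from $X=Q-A^\ast X A$, substitute the equation into itself once to get $X = Q - A^\ast Q A + A^{\ast2} X A^2$, i.e.
\begin{align*}
X - A^{\ast 2}X A^2 = Q - A^\ast Q A.
\end{align*}
Thus $X$ solves a ``$-$''-type Stein equation \eqref{CS11} but with $A$ replaced by $A^2$ and right-hand side $Q-A^\ast Q A$, which is $>0$ (resp.\ $\ge 0$) by assumption. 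Applying Lemma \ref{op}(1) (resp.\ (2)) with $A^2$ in place of $A$ and $X_0=X$ yields that $A^2$ is discrete-time asymptotically (resp.\ Lyapunov) stable. Finally I would transfer this conclusion back to $A$: $\rho(A^2)<1\iff\rho(A)<1$, which handles the asymptotic case; for the Lyapunov case one checks that $A^2$ having all unimodular eigenvalues of index $1$ forces the same for $A$ (a Jordan block of $A$ of size $\ge 2$ for a unimodular $\lambda$ produces a Jordan block of $A^2$ of size $\ge 2$ for $\lambda^2$, which is still unimodular), so $A$ is discrete-time Lyapunov stable.

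The main obstacle is the Lyapunov (boundary) sub-case of part 2: the ``squaring'' trick cleanly reduces \eqref{CS12} to \eqref{CS11} for $A^2$, but one must be careful that the index-$1$ property of unimodular eigenvalues genuinely descends from $A^2$ to $A$ — this is where the argument could go wrong if $A$ had a nontrivial Jordan structure at a unimodular eigenvalue whose square collides with another. I would dispatch this by the Jordan-block observation above (the map $\lambda\mapsto\lambda^2$ does not decrease the size of a Jordan block, and maps the unit circle to itself), so no collision can destroy a size-$\ge2$ block. Everything else is a direct substitution into Lemma \ref{op}.
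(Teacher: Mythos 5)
Your proof is correct and follows essentially the same route as the paper: part 1 is a direct application of Lemma~\ref{op}, and part 2 uses the same squaring trick, rewriting \eqref{CS12} as $X-(A^{\ast})^{2}XA^{2}=Q-A^{\ast}QA$ and applying Lemma~\ref{op} with $A^{2}$ in place of $A$. Your additional care in transferring stability from $A^{2}$ back to $A$ (via $\rho(A^{2})=\rho(A)^{2}$ and the Jordan-block index argument for unimodular eigenvalues) fills in a step the paper's one-line proof leaves implicit, and it is argued correctly.
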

\begin{proof}
{Part 1 is a direct consequence of} Lemma~\ref{op}. Concerning part 2, let $X>0$ be a solution of Eq.~\eqref{CS12}, { then} $X-(A^\ast)^2 X A^2=Q-A^\ast Q A {>0 \ (\geq 0)}$, {the proof is completed by applying Lemma~\ref{op}.}
\end{proof}
The following simple result is useful, which is used to obtain the main result.
\begin{Lemma}\label{op1}
Let $J$ be the Jordan canonical matrix with size $n\times n$. If $\rho(J)\geq 1$ and  $S_J(X_{\rm p})\geq 0$ for some $X_{\rm p}\geq 0$, then, we have the following statements: 
\begin{enumerate}
  \item $S_J(X_{\rm p})=0$. In other words, $S_J(N_n)\cap N_n=0_n$.
  \item $X_{\rm p}=0$ if $\rho(J)>1$ and $X_{\rm p}=xe_ne_n^\top$ if $\rho(J)=1$, where $x\geq 0$.
\end{enumerate}
\end{Lemma}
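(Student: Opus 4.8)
The plan is to exploit the Jordan structure of $J$ directly, treating the single Jordan block as the model case and then passing to a direct sum. First I would reduce to the case where $J$ is a single Jordan block: if $J = J_1 \oplus \cdots \oplus J_s$, then $S_J(X) \geq 0$ with $X \geq 0$ restricts, on each diagonal block, to $S_{J_i}(X_{ii}) - (\text{cross terms}) \geq 0$; more cleanly, I would argue that the diagonal blocks $X_{ii}$ of $X$ (conformal with the block structure of $J$) must themselves satisfy a compatible positivity, and reduce to showing each block contributes nothing unless its eigenvalue is unimodular. So the heart of the matter is a single Jordan block $J = \lambda I + N$ with $|\lambda| \geq 1$, where $N$ is the nilpotent shift.

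For a single block, I would write the $(n,n)$ entry (bottom-right) of the inequality $X_{\rm p} - J^\ast X_{\rm p} J \geq 0$. Since a positive semidefinite matrix has nonnegative diagonal, $e_n^\top(X_{\rm p} - J^\ast X_{\rm p} J)e_n \geq 0$, i.e. $(X_{\rm p})_{nn} \geq (Je_n)^\ast X_{\rm p}(Je_n)$. But $Je_n = \lambda e_n$ (the last column of a Jordan block has only the diagonal entry), so this reads $(X_{\rm p})_{nn} \geq |\lambda|^2 (X_{\rm p})_{nn} \geq (X_{\rm p})_{nn}$ since $|\lambda|\geq 1$; hence $(X_{\rm p})_{nn}(|\lambda|^2-1) = 0$. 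When $|\lambda|>1$ this forces $(X_{\rm p})_{nn}=0$; since $X_{\rm p}\geq 0$, a zero diagonal entry forces the entire $n$-th row and column of $X_{\rm p}$ to vanish. Now I would induct downward: with the last row/column gone, $X_{\rm p}$ effectively lives on the top-left $(n-1)\times(n-1)$ corner, and $Je_{n-1} = \lambda e_{n-1} + e_{n-2}$ (for $n\geq 2$), but the $e_{n-2}$-component interacts only with surviving entries; examining the $(n-1,n-1)$ entry of $S_J(X_{\rm p})\geq 0$ and using that the $n$-th coordinate is already killed gives $(X_{\rm p})_{n-1,n-1}(|\lambda|^2-1)=0$ again, and so on. This yields $X_{\rm p}=0$ when $\rho(J)>1$. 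When $|\lambda|=1$ exactly, the bottom entry $(X_{\rm p})_{nn}=x\geq 0$ is unconstrained by the $(n,n)$ relation, but looking at the next diagonal entry and using $Je_{n-1}=\lambda e_{n-1}+e_{n-2}$ one finds a term forcing $(X_{\rm p})_{n-1,n-1}$ and the relevant off-diagonal entries to vanish (the cross term $2\,\mathrm{Re}(\bar\lambda (X_{\rm p})_{n-1,n-2})$ must be absorbed, which combined with Cauchy–Schwarz on the PSD matrix pins everything down); iterating gives $X_{\rm p}=x\,e_n e_n^\top$. Substituting back, $S_J(x e_n e_n^\top) = x e_n e_n^\top - |\lambda|^2 x e_n e_n^\top = 0$, giving statement (1), and the form of $X_{\rm p}$ gives statement (2); the identity $S_J(\mathbb{N}_n)\cap\mathbb{N}_n=0_n$ is just the statement that the only PSD matrix in the image is $0$, which follows since any such matrix is $S_J(X_{\rm p})$ for some $X_{\rm p}\geq 0$ and we have shown that is $0$.

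The main obstacle I anticipate is the bookkeeping in the $|\lambda|=1$ case and in the multi-block reduction: the off-diagonal entries of $X_{\rm p}$ do not vanish for free from the diagonal-entry argument alone, so one must couple the "$(X_{\rm p})_{jj}=0 \Rightarrow j$-th row/column $=0$" principle for PSD matrices with a careful downward induction that at each stage has already zeroed out all higher-index coordinates, so that the shift term $N$ only ever points into the still-live top-left corner. A clean way to package this is to prove the contrapositive-flavored lemma: if $X_{\rm p}\geq 0$, $S_J(X_{\rm p})\geq 0$, and $X_{\rm p}\neq 0$, let $j$ be the largest index with $(X_{\rm p})_{jj}>0$; examine the $(j,j)$ entry of $S_J(X_{\rm p})$, which by the structure of $J$ and the vanishing of all coordinates above $j$ equals $(X_{\rm p})_{jj} - |\lambda|^2 (X_{\rm p})_{jj} + (\text{terms involving } (X_{\rm p})_{j,j-1})$, and derive a contradiction with $\rho(J)>1$, resp. extract that $j=n$ when $\rho(J)=1$. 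I would rely only on the elementary fact that a PSD matrix with a zero diagonal entry has the corresponding row and column equal to zero, keeping in the spirit of "only using primary matrix theories" advertised in the abstract.
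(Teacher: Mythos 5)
Your overall strategy is essentially the paper's own: anchor at a corner diagonal entry of $Y_{\rm p}:=S_J(X_{\rm p})$, use the fact that a positive semidefinite matrix with a zero diagonal entry has the whole corresponding row and column equal to zero, and march along the diagonal. However, you anchor at the wrong corner because of a false structural claim, and this breaks the very first step. For the upper-triangular Jordan block (the convention forced by the lemma's conclusion $X_{\rm p}=x e_ne_n^\top$ and the one the paper uses, where $Je_1=\lambda e_1$ and $Je_k=e_{k-1}+\lambda e_k$ for $k\geq 2$), the last column is $Je_n=e_{n-1}+\lambda e_n$, \emph{not} $\lambda e_n$. Consequently the $(n,n)$ entry of $S_J(X_{\rm p})$ is $(1-|\lambda|^2)[X_{\rm p}]_{n,n}-[X_{\rm p}]_{n-1,n-1}-2\,\mathrm{Re}(\lambda [X_{\rm p}]_{n-1,n})$, which involves entries you have not yet controlled (and the cross term can be negative), so your claimed identity $(|\lambda|^2-1)[X_{\rm p}]_{n,n}=0$ does not follow and the downward induction never gets started. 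You also mix the two conventions: $Je_n=\lambda e_n$ is the lower-triangular (subdiagonal) convention, while your later $Je_{n-1}=\lambda e_{n-1}+e_{n-2}$ is the upper-triangular one. Under a consistently lower-triangular convention your own scheme would, in the unimodular case, force $[X_{\rm p}]_{n,n}=0$ (from $[Y_{\rm p}]_{n,n}=0$, hence $[Y_{\rm p}]_{n-1,n}=-\lambda[X_{\rm p}]_{n,n}=0$), and the surviving matrix would be $x e_1e_1^\top$, contradicting the conclusion you state; so the proposal is internally inconsistent as written.

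The repair is exactly the paper's argument: anchor at the $(1,1)$ entry, where $Je_1=\lambda e_1$ genuinely holds, so $[Y_{\rm p}]_{1,1}=(1-|\lambda|^2)[X_{\rm p}]_{1,1}\geq 0$. For $|\lambda|>1$ this kills $[X_{\rm p}]_{1,1}$ outright; for $|\lambda|=1$ it only gives $[Y_{\rm p}]_{1,1}=0$, and one must then use $Y_{\rm p}\geq 0$ to conclude $[Y_{\rm p}]_{1,2}=0$ and compute $[Y_{\rm p}]_{1,2}=-\bar\lambda[X_{\rm p}]_{1,1}$ to force $[X_{\rm p}]_{1,1}=0$. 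The forward march then alternates between the diagonal entries of $Y_{\rm p}$ and the superdiagonal entries $[Y_{\rm p}]_{k,k+1}$ — this explicit computation is precisely the step you wave at with ``Cauchy--Schwarz pins everything down'' — killing $[X_{\rm p}]_{k,k}$ for $k\leq n-1$ and leaving only $[X_{\rm p}]_{n,n}=x\geq 0$ in the unimodular case, whence $X_{\rm p}=x e_ne_n^\top$ and $Y_{\rm p}=0$; in the case $|\lambda|>1$ every diagonal entry dies and $X_{\rm p}=0$. Finally, your multi-block reduction is only sketched: restricting $S_J(X_{\rm p})\geq 0$ to diagonal blocks does give $S_{J_i}([X_{\rm p}]_{ii})\geq 0$ with $[X_{\rm p}]_{ii}\geq 0$, but the off-diagonal blocks of $X_{\rm p}$ and of $S_J(X_{\rm p})$ are not disposed of by that observation; note that the paper's proof itself simply writes $J=aI_n+N$ and treats a single block, which is all that the stated conclusion really covers.
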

\begin{proof}
For the sake of simplicity, the notation $[A]_{i,j}$ denotes $(i,j)$ entry of a matrix $A\in\mathbb{C}^{m\times m}$ for $1\leq i,j\leq m$.
For any positive semidefinite matrix $A$,
it is clear that some diagonal entry $[A]_{i,i}=0$ if and only if the row and the column containing $[A]_{i,i}$ consist entirely of 0.

According to the assumption, we write $J=aI_n+N$, where $|a|\geq 1$ and $N$ is a $n\times n$ nilpotent matrix. Let $Y_{\rm p}=S_J(X_{\rm p})=X_{\rm p}-J^\ast X_{\rm p} J$. In the case of $|a|>1$, we have $0\leq [Y_{\rm p}]_{1,1} = [X_{\rm p}]_{1,1}- |a|^2e_1^\top X_{\rm p} e_1\leq 0$ and thus $0=[Y_{\rm p}]_{1,1}=[X_{\rm p}]_{1,1}$. If $|a|=1$, it is immediate that $[Y_{\rm p}]_{1,1} = [X_{\rm p}]_{1,1}- e_1^\top X_{\rm p}e_1=0$. {Since $Y_{\rm p}\geq 0$, we have $[Y_{p}]_{1,j}=[Y_{\rm p}]_{j,1}=0$ for $j=2,\ldots,n$.}  so that, $0=[Y_{\rm p}]_{1,2} =[X_{\rm p}]_{1,2}-(\bar{a} e_1^\top) X_{\rm p} (e_1+a e_2)=-\bar{a}[X_{\rm p}]_{1,1}$. It follows that $[X_{\rm p}]_{1,1}=0$ and thus $[X_{\rm p}]_{1,j}=[X_{\rm p}]_{j,1}=0$ for $1\leq j\leq n$.

For $2\leq k \leq n-1$, we observe that
{
\begin{subequations}\label{yy}
 \begin{align}
  [Y_{\rm p}]_{k,k+1} &=(1-|a|^2)[X_{\rm p}]_{k,k+1}-([X_{\rm p}]_{k-1,k}+\bar{a} [X_{\rm p}]_{k,k}+ a [X_{\rm p}]_{k-1,k+1}),\label{yy1}\\
  [Y_{\rm p}]_{k,k}  &=(1-|a|^2)[X_{\rm p}]_{k,k}-([X_{\rm p}]_{k-1,k-1}+2\mbox{Re}(a [X_{\rm p}]_{k-1,k})).\label{yy2}
\end{align}
\end{subequations}
}
{ Note that \eqref{yy2} also holds for $k=n$.}
Let $k=2$ and $|a|=1$. Substituting $[X_{\rm p}]_{1,1}=[X_{\rm p}]_{1,2}=0$ into \eqref{yy2} yields $[Y_{\rm p}]_{2,2}=0$ {so that $[Y_p]_{2,j}=[Y_p]_{j,2}=0$ for $1\leq j\leq n$. Substituting} $[Y_{\rm p}]_{2,3}=[X_{\rm p}]_{1,2}=[X_{\rm p}]_{1,3}=0$ into \eqref{yy1} yields $[X_{\rm p}]_{2,2}=0$. Repeat this procedure we have $[X_{\rm p}]_{k,k}=[Y_{\rm p}]_{k,k}=0$ for $1\leq k \leq n-1$. Hence, $X_{\rm p}=0\oplus [X_{\rm p}]_{n,n}\geq 0$ and $Y_{\rm p}=0$.

On the other hand, let $k=2$ and $|a|>1$. Substituting $[X_{\rm p}]_{1,1}=[X_{\rm p}]_{1,2}=0$ into \eqref{yy2} we have $0\leq[Y_{\rm p}]_{2,2}=(1-|a|^2)[X_{\rm p}]_{2,2}$ {so that} $[Y_{\rm p}]_{2,2}=[X_{\rm p}]_{2,2}=0$. {Repeating the procedure on $k$, we obtain} $[Y_{\rm p}]_{k,k}=[X_{\rm p}]_{k,k}=0$ for $k>2$. Hence, $X_{\rm p}=Y_{\rm p}=0$.
\end{proof}
\section{Main results on the DARE}
{
To make our main results more clearly and explicitly, the
rest of the section is divided into two parts, respectively: One is the sufficient condition for the existence of extreme solutions of DARE and the other is new convergence results to a fixed point iteration $X_{k+1}=R_{\rm d}(X_k)$.
}
\subsection{New results on the extreme solutions of DARE}
First of all, we consider the extreme solutions of DARE~\eqref{eq:NMEP}. Inspired by the results of Theorem~\ref{pre}, we introduce two subsets on $\mathbb{N}_n$:
\begin{align*}
  \mathbb{R}_{\geq}:=\{X\in\mathbb{N}_n|X\geq R_{\rm d}(X)\},\,
  \mathbb{R}_{\leq}:=\{X\in\mathbb{N}_n|X\leq R_{\rm d}(X)\}.
\end{align*}
Then we have the following result concerning the existence of minimal and maximal positive semidefinite solutions of Eq.~\eqref{eq:NMEP}.
\begin{Lemma}\label{lem-sayed}
Let $\mathbb{S}_{\geq}:=\{X\in\mathbb{N}_n|S_A(X)\geq H\}$ and $\mathbb{S}_{\leq}:=\{X\in\mathbb{N}_n|S_A(X)\leq H\}$. Consider the fixed-point iteration $X_{k+1}=R_{\rm d}(X_k)$ with an initial $X_1$. Then we have the following statements:
\begin{enumerate}
  \item Assume that $ \mathbb{R}_{\geq}\neq \phi$ and let $X_1\in[0,H]$. Then, the sequence $\{X_k\}$ converges increasingly to the minimal positive semidefinite solution of Eq.~\eqref{eq:NMEP}.
  \item Assume that $\mathbb{S}_{\geq}\neq\phi$ and let $X_1\in\mathbb{S}_{\geq}$, then the sequence $\{X_k\}$ converges decreasingly to a positive semidefinite solution of Eq.~\eqref{eq:NMEP}.
  \item Assume that $\rho(A)<1$. Then, $\mathbb{S}_{\geq}\neq\phi$ and the sequence $\{X_k\}$ with $X_1\in\mathbb{S}_{\geq}$ converges decreasingly to the maximal positive semidefinite solution of Eq.~\eqref{eq:NMEP}.
\end{enumerate}
\end{Lemma}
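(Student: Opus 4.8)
The plan is to prove the three assertions by a uniform strategy: cast the fixed‑point iteration $X_{k+1}=R_{\rm d}(X_k)$ into the framework of Theorem~\ref{pre} with $F=R_{\rm d}$, which is legitimate because Proposition~\ref{pro1} guarantees that $R_{\rm d}$ is order preserving on $\mathbb{N}_n$, and because $R_{\rm d}$ is continuous wherever $I+GX$ is invertible (in particular on any order interval $[0,B]\subseteq\mathbb{N}_n$, since $X\geq 0$ forces $I+GX$ to be invertible with $(I+GX)^{-1}$ bounded). The only real work is to produce, in each of the three cases, an explicit pair $\widehat{X}_2\in\mathbb{R}_{\leq}$ (a subsolution) and $\widehat{X}_1\in\mathbb{R}_{\geq}$ (a supersolution) with $\widehat{X}_2\leq\widehat{X}_1$, together with the relevant one‑sided bound property so that the limit is actually the extreme solution.

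For part (1), I would take $\widehat{X}_2=0$: since $H,G\geq 0$, we get $R_{\rm d}(0)=H\geq 0$, so $0\in\mathbb{R}_{\leq}$. The hypothesis $\mathbb{R}_{\geq}\neq\phi$ supplies some $\widehat{X}_1\in\mathbb{R}_{\geq}$, and since any $Y\in\mathbb{R}_{\leq}$ satisfies $Y\leq R_{\rm d}(Y)\leq\cdots$, a short argument (or direct invocation of the ordering in Theorem~\ref{pre}(1d)) shows $\widehat{X}_1\geq 0$, so the pair is comparable. Moreover $0$ is a lower bound of $\mathbb{R}_{\geq}=S_1$ (every element of $\mathbb{N}_n$ is $\geq 0$, and $\mathbb{R}_{\geq}\subseteq\mathbb{N}_n$). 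Theorem~\ref{pre}(1b)+(1d) then give that $\{X_k^-\}$ started at $0$ increases to the minimal positive semidefinite solution; note $X_1\in[0,H]$ is compatible because $X_2^-=R_{\rm d}(0)=H$ and the sequence from any $X_1\in[0,H]$ is squeezed between the sequences from $0$ and from $H$ once one checks $H\in\mathbb{R}_{\geq}$ is not needed — rather one uses monotonicity of $R_{\rm d}$ directly to show $\{X_k\}$ with $X_1\in[0,H]$ is increasing and bounded above by $\widehat{X}_1$. I would spell out that last squeezing step since it is where the interval $[0,H]$ is actually used.

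For part (2), the set $\mathbb{S}_{\geq}$ is contained in $\mathbb{R}_{\geq}$: if $S_A(X)=X-A^\ast X A\geq H$ then $X\geq H+A^\ast X A\geq H+A^\ast X(I+GX)^{-1}A=R_{\rm d}(X)$, using $A^\ast X A\geq A^\ast X(I+GX)^{-1}A$, which holds because $X\geq X(I+GX)^{-1}$ for $X\geq 0$ (this is exactly \eqref{1} of Proposition~\ref{pro1} with the roles arranged suitably, or a one‑line resolvent computation $X-X(I+GX)^{-1}=X(I+GX)^{-1}GX\geq 0$). So any $X_1\in\mathbb{S}_{\geq}$ is a supersolution, take $\widehat{X}_1=X_1$; pair it with $\widehat{X}_2=0\in\mathbb{R}_{\leq}$, which is $\leq\widehat{X}_1$. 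Then Theorem~\ref{pre}(1c) gives the decreasing convergence of $\{X_k^+\}=\{X_k\}$ to a positive semidefinite solution. Here we do not claim maximality because $\widehat{X}_1$ need not be an upper bound of $\mathbb{R}_{\leq}$.

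For part (3), assuming $\rho(A)<1$, the Stein operator $S_A$ is invertible with $S_A^{-1}$ order preserving (as recalled in the excerpt), so $X_\star:=S_A^{-1}(H)=\sum_{k\geq 0}(A^k)^\ast H A^k\in\mathbb{N}_n$ solves $S_A(X_\star)=H$, hence $X_\star\in\mathbb{S}_{\geq}$, proving $\mathbb{S}_{\geq}\neq\phi$. To upgrade the limit in part (2) to the maximal solution, I must verify the extra condition in Theorem~\ref{pre}(1d): that $\widehat{X}_1$ is an upper bound of $S_2$, i.e. of $\mathbb{R}_{\leq}$, hence in particular of every Hermitian solution. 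The clean choice is to run the iteration from $\widehat{X}_1=X_\star=S_A^{-1}(H)$, and show $X_\star\geq S$ for every Hermitian solution $S$ of \eqref{eq:NMEP}: from $S=R_{\rm d}(S)=H+A^\ast S(I+GS)^{-1}A\leq H+A^\ast S A$ we get $S_A(S)\leq H=S_A(X_\star)$, and applying the order‑preserving $S_A^{-1}$ yields $S\leq X_\star$. (One should note $S\geq 0$ is assumed in this lemma's "positive semidefinite solution" setting, so $A^\ast S(I+GS)^{-1}A$ makes sense and the inequality $S(I+GS)^{-1}\leq S$ is valid.) Then Theorem~\ref{pre}(1c)+(1d) finish the proof. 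The main obstacle is this last point — establishing that $S_A^{-1}(H)$ dominates all solutions — but as sketched it reduces to the single inequality $A^\ast S(I+GS)^{-1}A\leq A^\ast S A$ plus monotonicity of $S_A^{-1}$, so it is short once the resolvent identity $S-S(I+GS)^{-1}=S(I+GS)^{-1}GS\geq 0$ is in hand.
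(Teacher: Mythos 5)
Your proposal is correct in substance and follows essentially the same route as the paper: reduce everything to Theorem~\ref{pre} via the order preservation of $R_{\rm d}$, use the identity $X-X(I+GX)^{-1}=X(I+GX)^{-1}GX\geq 0$ (the paper phrases it through the Woodbury identity $X(I+GX)^{-1}=X-XG(I+XG)^{-1}X$) to get $\mathbb{S}_{\geq}\subseteq\mathbb{R}_{\geq}$, and invoke the order-preserving inverse $S_A^{-1}$ when $\rho(A)<1$ to obtain the upper-bound property needed for maximality. In part~(1) the paper is slightly more direct than your squeeze through the iterates from $0$ and $H$: it simply observes that any $X_1\in[0,H]$ satisfies $X_1\leq H\leq R_{\rm d}(X_1)$, so $X_1\in\mathbb{R}_{\leq}$, and that $X_1\leq H\leq X$ for every $X\in\mathbb{R}_{\geq}$, so $X_1$ itself is a lower bound of $\mathbb{R}_{\geq}$; Theorem~\ref{pre}(1b),(1d) then apply with $\widehat{X}_2=X_1$, no sandwich argument needed.

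The one point you should repair is part~(3): the lemma asserts decreasing convergence to the maximal positive semidefinite solution for \emph{every} $X_1\in\mathbb{S}_{\geq}$, whereas your proof runs the iteration only from the particular matrix $S_A^{-1}(H)$. The fix is a one-line extension of your own inequality, and it is exactly the paper's argument: for any solution (indeed any $S\in\mathbb{R}_{\leq}$, all of which lie in $\mathbb{S}_{\leq}$) one has $S_A(S)\leq H\leq S_A(X_1)$ for every $X_1\in\mathbb{S}_{\geq}$, and applying the order-preserving $S_A^{-1}$ gives $S\leq X_1$; hence every $X_1\in\mathbb{S}_{\geq}$ dominates $\mathbb{R}_{\leq}$ and Theorem~\ref{pre}(1c),(1d) yield the maximal solution as the limit. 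With that adjustment the argument covers the statement as written.
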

\begin{proof}
Applying the Woodbury identity, we have $X(I+GX)^{-1}=X-XG(I+XG)^{-1}X$, it is then easily seen that $X-R_{\rm d}(X)=S_A(X)-H+A^\ast XG(I+XG)^{-1}XA$,
from which we know $\mathbb{S}_{\geq}\subseteq\mathbb{R}_{\geq}$ and $\mathbb{R}_{\leq}\subseteq\mathbb{S}_{\leq}$.

For part 1, {observe that $X_1\in \mathbb{R}_{\leq}$ and $X_1\leq H\leq X$ for any $X\in \mathbb{R}_{\geq}$, this implies that $X_1\in \mathbb{R}_{\leq}$ is a lower bound of $\mathbb{R}_{\geq}$. It follows from Theorem ~\ref{pre} that the sequence $\{X_k\}$ is monotonically increasing and converges to the minimal positive semidefinite solution of Eq. \eqref{eq:NMEP}.}

 Concerning part 2, it follows from $X_1\in \mathbb{S}_{\geq}$ that  $X_1\in \mathbb{R}_{\geq}$, so that $X_1\geq H$. Analogous to Theorem~\ref{pre}, one can prove that the sequence $\{X_k\}$ is monotonically decreasing and converges to a positive definite solution of Eq. \eqref{eq:NMEP}.

   For part 3, {observe that for any $X\in \mathbb{R}_{\leq}$ we have $X\in \mathbb{S}_{\leq}$, which yields $S_A(X_1)\geq H\geq S_A(X)$, the first inequality holds since {\color{red}$X_1\in \mathbb{S}_{\geq}$}. Hence,} $X_1\geq X$ since $\rho(A)<1$. {That is, $X_1\in \mathbb{R}_{\geq}$ is an upper bound of $\mathbb{R}_{\leq}$, which, together with part (1d) of Theorem \ref{pre}, shows that the sequence $\{X_k\}$ with $X_1\in \mathbb{S}_{\geq}$ converges decreasingly to the maximal positive semidefinite solution of Eq. \eqref{eq:NMEP}.}
\end{proof}
We also notice that the proof of the final part of Lemma~\ref{lem-sayed} was motivated by \cite{Sayed01}[Theorem 5.1]. It seems that the assumption $\mathbb{R}_{\geq}\neq\phi$ is not easy to check. An useful sufficient condition on the coefficient matrices for the existence of the positive semidefinite solution of Eq.\eqref{eq:NMEP} can be written as follows.

\begin{Corollary}\label{thm:pd1}
Assume that the matrices $A\in\mathbb{C}^{n\times n}$, $G\in\mathbb{N}_n$ and $H\in\mathbb{N}_n$ satisfy one of the following two conditions:
\begin{enumerate}
  \item $G$ is nonsingular, i.e., $G>0$.
  \item $G$ is singular and  $\rho(A^\ast A)\leq 1$.
\end{enumerate}
Then, there exists a positive semidefinite solution to~\eqref{eq:NMEP} and thus $\mathbb{R}_{\geq}\neq\phi$.
\end{Corollary}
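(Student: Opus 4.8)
The plan is to construct an explicit element of $\mathbb{R}_{\geq}$ (equivalently, to show $\mathbb{S}_{\geq}\neq\phi$ in most cases, since $\mathbb{S}_{\geq}\subseteq\mathbb{R}_{\geq}$ was established in the proof of Lemma~\ref{lem-sayed}), and then invoke Lemma~\ref{lem-sayed}(1) to obtain a positive semidefinite solution. The two hypotheses are handled separately. In case~2, when $\rho(A^\ast A)\leq 1$, I would try $X=H+c I_n$ or simply analyze $S_A$ directly: since $\rho(A^\ast A)\leq 1$ gives $\|A\|_2\leq 1$, we have $A^\ast X A\leq X$ for every $X\in\mathbb{N}_n$, hence $S_A(X)=X-A^\ast X A\geq 0$ for all $X\geq 0$. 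Then one needs $S_A(X)\geq H$ for a suitable $X$; a natural candidate is to look for $X$ with $X - A^\ast X A = H$ exactly, but that requires $\rho(A)<1$, which is not assumed. Instead I would use the monotone/limiting argument: set $X_N=\sum_{k=0}^{N}(A^k)^\ast H A^k$; if this sequence is bounded above it converges to a solution of the Stein equation, but boundedness again needs $\rho(A)<1$. So in the critical case $\rho(A^\ast A)=1$ one must argue more carefully — this is where the singularity of $G$ should enter, presumably through Corollary~\ref{thm:pd1} being applied with a perturbation or through the observation that $R_{\rm d}$ maps $[0,H]$ into itself when combined with $S_A\ge 0$.

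For case~1, where $G>0$, the key observation is that the term $A^\ast X(I+GX)^{-1}A$ is \emph{bounded uniformly in} $X\ge 0$: writing $X(I+GX)^{-1}=(X^{-1}+G)^{-1}\le G^{-1}$ formally (rigorously via the $\epsilon$-regularization $X_\epsilon=X+\epsilon I$ exactly as in the proof of Proposition~\ref{pro1}), we get $R_{\rm d}(X)\le H+A^\ast G^{-1}A$ for every $X\in\mathbb{N}_n$. Therefore the constant matrix $\widehat X:=H+A^\ast G^{-1}A$ satisfies $R_{\rm d}(\widehat X)\le \widehat X$, i.e.\ $\widehat X\in\mathbb{R}_{\geq}$, so $\mathbb{R}_{\geq}\neq\phi$. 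I expect this half to be short and clean.

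The main obstacle is case~2 in the truly critical situation $\rho(A^\ast A)=1$: here no uniform upper bound on $R_{\rm d}$ is available from $G$ (which is singular), and $\rho(A)\leq 1$ does not by itself guarantee the Stein series converges. The resolution I would pursue is to show that the fixed-point iteration $X_{k+1}=R_{\rm d}(X_k)$ starting from $X_1=0$ stays inside $[0,H]$: one has $0\le R_{\rm d}(0)=H$, and if $0\le X_k\le H$ then $R_{\rm d}(X_k)\ge R_{\rm d}(0)=H\ge\ldots$ — wait, monotonicity gives $R_{\rm d}(X_k)\ge H$, not $\le H$, so $[0,H]$ is not obviously invariant. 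Instead I would look for \emph{any} $X$ with $R_{\rm d}(X)\le X$; a promising route is $X = H + A^\ast H A + \cdots$ truncated, using that $A^\ast Y A\le Y$-type inequalities under $\|A\|_2\le1$ may force the partial sums to stabilize because $H$ itself lies in a subspace annihilated by the unimodular part of $A$. Concretely, I would decompose $A$ via its Jordan/Schur form, isolate the unimodular eigenvalues, and argue (in the spirit of Lemma~\ref{op1}) that compatibility of $H$ with the unit-circle spectral subspace of $A$ is automatic here, so the Stein equation $S_A(X)=H$ has a positive semidefinite solution even when $\rho(A)=1$. Pinning down exactly this compatibility, and confirming it follows from the stated hypotheses rather than needing an extra assumption, is the delicate point of the argument.
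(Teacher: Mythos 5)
Your treatment of case 1 is correct and in fact a little cleaner than the paper's: you bound $X(I+GX)^{-1}\leq G^{-1}$ (via the same $\epsilon$-regularization as in Proposition~\ref{pro1}) and conclude that $\widehat{X}=H+A^\ast G^{-1}A$ satisfies $R_{\rm d}(\widehat{X})\leq\widehat{X}$, so $\widehat{X}\in\mathbb{R}_{\geq}$. The paper instead exhibits a scalar supersolution $x_cI_n$, where $x_c\geq 0$ is chosen from the quadratic inequality $gx^2+(1-a-hg)x-h\geq 0$ with $g=\lambda_{\min}(G)$, $a=\|A\|_2^2$, $h=\rho(H)$, and checks $R_{\rm d}(x_cI_n)\leq\bigl(h+\frac{ax_c}{1+gx_c}\bigr)I_n\leq x_cI_n$; both arguments are valid for $G>0$, and the paper's single quadratic handles the two hypotheses uniformly.

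The genuine gap is case 2, and it occurs in two places. First, your opening claim that $\|A\|_2\leq 1$ implies $A^\ast XA\leq X$ for every $X\in\mathbb{N}_n$ is false: take $A=\left(\begin{smallmatrix}0&1\\0&0\end{smallmatrix}\right)$, so $\|A\|_2=1$, and $X=e_1e_1^\ast$; then $A^\ast XA=e_2e_2^\ast\not\leq X$. The inequality only holds for scalar matrices $X=xI_n$, and that is exactly the observation you brushed past: for $X=xI_n$ one has $A^\ast xI_n(I+xG)^{-1}A\leq\frac{ax}{1+gx}I_n$, so with $G$ singular ($g=0$) and $a=\|A\|_2^2<1$ any $x\geq h/(1-a)$ gives $R_{\rm d}(xI_n)\leq(h+ax)I_n\leq xI_n$, i.e.\ $xI_n\in\mathbb{R}_{\geq}$ — which is precisely the paper's proof. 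Second, the alternative routes you sketch (solving $S_A(X)=H$, truncated Stein sums, a Jordan/Schur ``compatibility of $H$ with the unimodular subspace'' argument) are never brought to a conclusion, and by your own admission the delicate point is left open, so hypothesis 2 is not established by the proposal even in the regime $\rho(A^\ast A)<1$ where the scalar test matrix settles it immediately. Your unease about the boundary case is, incidentally, well founded: the paper's own verification needs $a<1$ when $g=0$ (for $a=1$, $G=0$, $H\neq 0$ the scalar equation $x=1+x$ has no solution), but noticing that difficulty does not substitute for completing the noncritical case.
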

\begin{proof}
Let $h=\max\limits_{\lambda\in\sigma(H)}\lambda=\rho(H)$, $g=\min\limits_{\lambda\in\sigma(G)}\lambda$ and $a=\max\limits_{\lambda\in\sigma(A^\ast A)}\lambda=\|A\|_2^2$. It can be shown that the quadratic inequality
\[
g x^2+(1-a-hg)x-h\geq 0
\]
has a nonnegative solution $x_c$ if $g\neq 0$ or $g=0$ and $a<1$. In each
assumption we have
\[
x_c I_n \geq (h+\frac{ax_c}{1+gx_c})I_n \geq R_{\rm d}(x_c I_n).
\]
Thus, $\mathbb{R}_\geq$ is nonempty. From Theorem~\ref{solution}, there exists a positive semidefinite solution of~\eqref{eq:NMEP}.

\end{proof}

{
By the way, we are concerned with the (almost) stabilizing solution based on the following observation. Let $X\in \mathbb{R}_{\geq}$, then we have
\begin{align*}
  X-(T_X)^\ast X T_X&\geq H+(T_X)^\ast (X+XGX)T_X-(T_X)^\ast X T_X\\
  &= H+(T_X)^\ast XGX T_X\geq 0.
\end{align*}
}
In view of part 2 of Lemma~\ref{op}, we know that $\rho(T_X)\leq 1$ and ${\rm ind}_\lambda(T_X)=1$ for all $\lambda\in\sigma(A)\cap\mbox{bd}(\mathbb{D})$ if and only if $X>0$. Moreover, $\rho(T_X)<1$ if $H>0$. The above conclusion is summarized as follows.
\begin{Theorem}
Under the typical assumption that $H\geq0$ and $G\geq 0$, any positive definite solution $X$ of \eqref{eq:NMEP} is an almost stabilizing solution. Furthermore, $X$ is a stabilizing solution if $H>0$.
\end{Theorem}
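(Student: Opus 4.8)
The plan is to read off the result directly from the computation displayed just before the theorem statement together with part 2 of Lemma~\ref{op}. Recall that the excerpt establishes, for any $X\in\mathbb{R}_{\geq}$ (in particular for any positive semidefinite solution, since a solution satisfies $X=R_{\rm d}(X)\geq R_{\rm d}(X)$), the inequality
\begin{align*}
S_{T_X}(X)=X-(T_X)^\ast X T_X\geq H+(T_X)^\ast XGX T_X\geq 0,
\end{align*}
using $X+XGX\geq X$ and $H,G\geq 0$. So first I would note that if $X$ is a positive definite solution, then $X\in\mathbb{P}_n$ witnesses $S_{T_X}(X)\geq 0$, and part 2 of Lemma~\ref{op} applied to the matrix $A:=T_X$ yields that $T_X$ is discrete-time Lyapunov stable, i.e.\ $\rho(T_X)\leq 1$ and ${\rm ind}_\lambda(T_X)=1$ for every unimodular eigenvalue of $T_X$. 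By the definition given in the excerpt, this is exactly the statement that $X$ is an almost stabilizing solution.

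For the second assertion, I would sharpen the inequality above: when $H>0$ we have $S_{T_X}(X)\geq H>0$, so $X\in\mathbb{P}_n$ witnesses $S_{T_X}(X)>0$, and part 1 of Lemma~\ref{op} applied to $T_X$ gives $\rho(T_X)<1$, i.e.\ $X$ is a stabilizing solution. This covers both conclusions of the theorem.

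The only point that needs a little care — and I would flag it as the main (minor) obstacle — is the legitimacy of invoking Lemma~\ref{op} with the matrix $T_X=(I+GX)^{-1}A$ in place of a generic $A$. This requires that $T_X$ be a well-defined $n\times n$ complex matrix, i.e.\ that $I+GX$ be invertible; since $X$ is a solution of Eq.~\eqref{eq:NMEP}, $R_{\rm d}(X)$ is defined and hence $(I+GX)^{-1}$ exists, so this is automatic. One should also double-check the algebraic identity $X-(T_X)^\ast X T_X = H+(T_X)^\ast XGXT_X$ used above: expanding $(T_X)^\ast(X+XGX)T_X = A^\ast(I+XG)^{-1}(X+XGX)(I+GX)^{-1}A = A^\ast(I+XG)^{-1}X(I+GX)(I+GX)^{-1}A = A^\ast(I+XG)^{-1}XA$, which together with the defining relation $X=H+A^\ast X(I+GX)^{-1}A$ (and $(I+XG)^{-1}=((I+GX)^{-1})^\ast$ for Hermitian $X,G$) gives $X-(T_X)^\ast(X+XGX)T_X = X - A^\ast(I+XG)^{-1}XA = H$, and moving the $(T_X)^\ast XGX T_X$ term to the other side yields the claimed inequality. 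Everything else is an immediate application of the already-proved Lemma~\ref{op}, so no further estimates are needed.
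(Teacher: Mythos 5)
Your proposal is correct and follows essentially the same route as the paper: the observation that a (positive definite) solution satisfies $X-(T_X)^\ast X T_X=H+(T_X)^\ast XGXT_X\geq 0$ (and $\geq H>0$ when $H>0$), combined with parts 2 and 1 of Lemma~\ref{op} applied to $T_X$. Your extra verification of the identity via $A^\ast X(I+GX)^{-1}A=(T_X)^\ast(X+XGX)T_X$ is exactly the algebra implicit in the paper's displayed inequality, so nothing is missing.
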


\subsection{New results on the convergence of a fixed point iteration}
{
In this subsection, combine Lemma~\ref{op1}, and Lemma~\ref{lem3}, we will now examine the convergence behaviour of the fixed-point iteration $X_{k+1}=R_{\rm d}(X_k)$ with $X_1=H$. Assume that $\mathbb{R}_{\geq}\neq\phi$. From Lemma~\ref{lem-sayed} we know that $\{X_k\}$ converges to the minimal positive semidefinite solution $X_\star$.

Our main result includes the convergence in the case where $\rho(T_{X_\star})\geq 1$, so that generalizes the previous result in \cite{Huang2018}, where only the convergence in the case where $\rho(T_{X_\star})<1$ is considered. }The concepts of convergence speed of fixed-point iteration for solving DARE~\eqref{eq:NMEP} are related by the following result; see e.g., \cite{chiang18}[Appendix].
{
\begin{Lemma}\label{lem4}
 Let $Z_{k+1}=R_{\rm d}(Z_{k})$ be the fixed point iteration of~\eqref{eq:NMEP} with an initial matrix $Z_1$. If $Z_k$  converges to $Z_\star$, then
\begin{equation*}
\limsup\limits_{k\rightarrow\infty}\sqrt[k]{\|{Z}_\star-{Z}_k\|}\leq \rho(T_{Z_\star})^2.
\end{equation*}
\end{Lemma}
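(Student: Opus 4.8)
The plan is to convert the fixed-point recursion into an explicit formula for the error $E_k:=Z_\star-Z_k$ and then read off the $\limsup$ of its $k$-th roots. First I would record the elementary difference identity
\[
R_{\rm d}(X)-R_{\rm d}(Y)=T_X^\ast (X-Y)\,T_Y,\qquad T_X:=(I+GX)^{-1}A,
\]
valid for Hermitian $X,Y$ for which $I+GX$ and $I+GY$ are invertible. This follows from the resolvent-type identity $X(I+GX)^{-1}-Y(I+GY)^{-1}=(I+XG)^{-1}(X-Y)(I+GY)^{-1}$ (obtained by adding and subtracting $X(I+GY)^{-1}$ and using $(I+GX)^{-1}G=G(I+XG)^{-1}$, exactly the Woodbury-type manipulation already employed in the proof of Lemma~\ref{lem-sayed}), together with the observation that $A^\ast(I+XG)^{-1}=\big((I+GX)^{-1}A\big)^\ast=T_X^\ast$ since $X$ and $G$ are Hermitian.

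Applying the identity with $X=Z_\star$ (a solution, so $R_{\rm d}(Z_\star)=Z_\star$) and $Y=Z_k$ yields the error recursion $E_{k+1}=T_{Z_\star}^\ast E_k\,T_{Z_k}$, which unrolls to
\[
E_{k+1}=(T_{Z_\star}^\ast)^k\,E_1\,T_{Z_1}T_{Z_2}\cdots T_{Z_k}\qquad(k\ge1).
\]
Taking any induced norm and using submultiplicativity, $\|E_{k+1}\|\le\|(T_{Z_\star}^\ast)^k\|\,\|E_1\|\,\|T_{Z_1}\cdots T_{Z_k}\|$. The first factor is controlled by Gelfand's formula, $\|(T_{Z_\star}^\ast)^k\|^{1/k}\to\rho(T_{Z_\star}^\ast)=\rho(T_{Z_\star})$, and $\|E_1\|^{1/k}\to1$ (if $E_1=0$ the statement is trivial since then $E_k\equiv0$). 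So everything reduces to bounding the exponential growth rate of the product $T_{Z_1}\cdots T_{Z_k}$.

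For that I would establish the auxiliary fact: \emph{if $M_k\to M$ in $\mathbb{C}^{n\times n}$, then $\limsup_k\|M_1\cdots M_k\|^{1/k}\le\rho(M)$}. Fix $\epsilon>0$, choose an induced norm $\|\cdot\|_\ast$ with $\|M\|_\ast\le\rho(M)+\epsilon$, and choose $N$ with $\|M_k\|_\ast\le\rho(M)+2\epsilon$ for all $k\ge N$; then $\|M_1\cdots M_k\|_\ast\le C\,(\rho(M)+2\epsilon)^k$ with $C$ depending only on $M_1,\dots,M_{N-1}$ and $\epsilon$, and equivalence of norms plus $C^{1/k}\to1$ gives $\limsup_k\|M_1\cdots M_k\|^{1/k}\le\rho(M)+2\epsilon$; let $\epsilon\to0$. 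Since $Z_k\to Z_\star$ and $I+GZ_\star$ is invertible (otherwise $R_{\rm d}(Z_\star)$ would be undefined), continuity gives $T_{Z_k}\to T_{Z_\star}$, so the fact applies (only the tail matters, which also disposes of any early indices where $I+GZ_k$ might fail to be invertible) and yields $\limsup_k\|T_{Z_1}\cdots T_{Z_k}\|^{1/k}\le\rho(T_{Z_\star})$. Combining the three factors gives $\limsup_k\|E_{k+1}\|^{1/k}\le\rho(T_{Z_\star})^2$, and a routine index shift ($\|E_k\|^{1/k}=\|E_{(k-1)+1}\|^{1/((k-1)+1)}$, together with $\limsup_m a_m^{1/(m+1)}\le\limsup_m a_m^{1/m}$ for nonnegative $a_m$) turns this into $\limsup_k\sqrt[k]{\|Z_\star-Z_k\|}\le\rho(T_{Z_\star})^2$.

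The main obstacle is precisely the auxiliary fact on products: $T_{Z_\star}^\ast$ and $T_{Z_k}$ cannot in general be made simultaneously small by one induced norm (their common spectral radius $\rho(T_{Z_\star})$ may lie far below $\|T_{Z_\star}\|_2$), so the whole product cannot be collapsed into a single norm; instead $(T_{Z_\star}^\ast)^k$ and $T_{Z_1}\cdots T_{Z_k}$ must be estimated separately, each by its own $\epsilon$-optimal norm, and then recombined through norm equivalence — which is exactly where the factor $\rho(T_{Z_\star})^2$, rather than $\rho(T_{Z_\star})$, arises.
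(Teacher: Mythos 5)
Your proposal is correct: the difference identity $R_{\rm d}(X)-R_{\rm d}(Y)=T_X^\ast(X-Y)T_Y$, the unrolled error recursion, Gelfand's formula, and the $\epsilon$-optimal-norm estimate for products of matrices converging to $T_{Z_\star}$ together give exactly the claimed bound, and the edge cases ($E_1=0$, invertibility of $I+GZ_\star$, the index shift) are handled. The paper itself offers no proof of this lemma -- it simply cites the appendix of \cite{chiang18} -- and your argument is the standard error-recursion proof that such a reference would contain, so there is nothing to flag beyond noting that your write-up is self-contained where the paper defers to the literature.
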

}
Lemma \ref{lem4} shows that the fixed-point iteration works well if $\rho(T_{Z_\star})<1$, while the efficiency for the case where  $\rho(T_{Z_\star})\geq 1$ is difficult to tell. The aims of this subsection is to verity the R-linear convergence of $\{X_k\}$ under the case that $\rho(T_{X_\star})<1$.

Before proceeding with the main result of this section, we also require the following lemma, which is the original DARE divided into three DAREs with small scale.

%
%

\begin{Lemma}~\label{lem3}
Assume that there exists a matrix $X_+\in \mathbb{R}_{\geq}$. {Let $J_{T_{X_+}}=P^{-1}T_{X_+}P$ be the Jordan canonical form of $T_{X_+}$. Suppose that $J_{T_{X_+}}= J_1\oplus J_u \oplus J_s$, where $J_1\in\mathbb{C}^{m_1\times m_1}$ with $\sigma(J_1)\subseteq{\rm bd}(\mathbb{D})$, $J_u\in\mathbb{C}^{m_2\times m_2}$ with $\sigma(J_u)\cap\bar{\mathbb{D}}=\phi$ and $J_s\in\mathbb{C}^{m_3\times m_3}$ with $\sigma(J_s)\subseteq\mathbb{D}$}.
Namely, $|\lambda|=1$ for all $\lambda\in\sigma(J_1)$, $\rho(J_u^{-1})< 1$ and $\rho(J_s)<1$.
Then, we have the following statements:
\begin{itemize}
  \item [{\rm (a)}] We have nullity$(H)\geq m_1+m_2$. Furthermore, $H$ is congruent to a block diagonal matrix $0_{m_1+m_2} \oplus \widehat{H}_{s}$, where $\widehat{H}_{s}\in \mathbb{C}^{m_3\times m_3}$.
  \item [{\rm (b)}] Suppose that $X_+=R_{\rm d}(X_+)$. Then, $X_+$ is congruent to the block diagonal matrix $X_{+,1}\oplus X_{+,u}\oplus X_{+,s}$, where $X_{+,1}$, $X_{+,u}$ and $X_{+,s}$ are respectively positive semidefinite solution of DARE: $X=R_{\rm d,1}(X)$, $X=R_{\rm d,u}(X)$ and $X=R_{\rm d,s}(X)$. Furthermore, $T_{X_{+,1}}=J_1$, $T_{X_{+,u}}=J_u$ and $T_{X_{+,s}}=J_s$.

\end{itemize}

\end{Lemma}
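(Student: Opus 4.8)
The plan is to work with the Jordan similarity $J_{T_{X_+}} = P^{-1} T_{X_+} P$ and exploit the structure of the Stein-type inequality satisfied by $X_+$. Recall from the computation preceding the last displayed theorem that for any $X \in \mathbb{R}_{\geq}$ one has
\[
X - T_X^\ast X T_X \;\geq\; H + T_X^\ast X G X\, T_X \;\geq\; 0,
\]
and when $X_+ = R_{\rm d}(X_+)$ this in fact becomes an \emph{equality} in the sense that $X_+ - T_{X_+}^\ast X_+ T_{X_+} = H + T_{X_+}^\ast X_+ G X_+ T_{X_+}$, because $X_+ = R_{\rm d}(X_+) = H + A^\ast X_+ (I+GX_+)^{-1}A = H + T_{X_+}^\ast(X_+ + X_+ G X_+)T_{X_+}$. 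Multiplying the inequality $X_+ - T_{X_+}^\ast X_+ T_{X_+} \geq 0$ on the left by $P^\ast$ and on the right by $P$ and writing $Y := P^\ast X_+ P \geq 0$, I get $Y - J^\ast Y J \geq 0$, i.e. $S_J(Y) \geq 0$ with $J = J_1 \oplus J_u \oplus J_s$. The goal of part (a) is to extract the kernel information of $H$ from this, and part (b) is to block-diagonalize $Y$ (hence $X_+$ up to congruence) and descend each block to its own DARE.

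For part (a): I would decompose $Y$ into $3\times 3$ blocks $Y = (Y_{ij})$ conformal with $J_1 \oplus J_u \oplus J_s$. The crucial observation is that since $\sigma(J_1) \cup \sigma(J_u)$ lies on or outside the unit circle, Lemma~\ref{op1} (applied blockwise, or via an argument on $J_1 \oplus J_u$) forces the corresponding principal submatrix $\begin{bmatrix} Y_{11} & Y_{12} \\ Y_{21} & Y_{22}\end{bmatrix}$ of $Y$ to interact with $S_J(Y)$ only through the zero contribution; more precisely, restricting $S_J(Y) \geq 0$ to the leading $(m_1+m_2)$-block and using that $\rho(J_1 \oplus J_u) \geq 1$ together with the positive-semidefiniteness propagation in Lemma~\ref{op1}(1) gives $S_{J_1 \oplus J_u}$-type vanishing, which combined with $S_J(Y) \geq H' $ (where $H' := P^\ast H P$ suitably transformed — one has $Y - J^\ast Y J = P^\ast(X_+ - T_{X_+}^\ast X_+ T_{X_+})P \geq P^\ast H P$) yields that the leading $(m_1+m_2)\times(m_1+m_2)$ block of $P^\ast H P$ is zero. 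Since $P^\ast H P$ is positive semidefinite, a zero principal block forces the entire corresponding block rows and columns to vanish, so $P^\ast H P = 0_{m_1+m_2} \oplus \widehat H_s$; this gives both nullity$(H) \geq m_1 + m_2$ and the claimed congruence. I would be careful here to treat $J_1$ (unimodular) and $J_u$ (outside the disk) on the same footing by noting $\rho(J_1\oplus J_u)\ge 1$ and that Lemma~\ref{op1} only needs $\rho(J)\ge 1$ and $S_J(X_p)\ge 0$.

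For part (b): using that $S_J(Y) = H' + (\text{something}\geq 0)$ and that the leading block of $H'$ is zero, I would run the same argument to show $Y$ itself is block-diagonal across the split $m_1 \mid m_2 \mid m_3$ — the unimodular/unstable blocks cannot couple to anything without violating $S_J(Y) \geq 0$, and the coupling $Y_{13}, Y_{23}$ between $(J_1,J_u)$ and $J_s$ is killed because $S_{J_1}, S_{J_u}$ are injective on the relevant subspace while $J_s$ is a strict contraction (a standard Sylvester/Stein separation-of-spectra argument: the off-diagonal block equation $Y_{13} - J_1^\ast Y_{13} J_s = (H')_{13} = 0$ has only the zero solution since $\overline{\sigma(J_1)}\cdot\sigma(J_s) \not\ni 1$, and similarly for $Y_{23}$ with $J_u$). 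Once $Y = Y_1 \oplus Y_u \oplus Y_s$, each diagonal block satisfies its own Stein identity with $J_i$ as closed-loop matrix, and reading the DARE $X_+ = R_{\rm d}(X_+)$ through the congruence by $P$ and a further block-diagonal change of variables produces the three reduced DAREs $X = R_{\rm d,i}(X)$ with coefficient matrices the corresponding blocks of the transformed $A$, $G$, $H$, solved by $X_{+,1}, X_{+,u}, X_{+,s}$; the closed-loop identity $T_{X_{+,i}} = J_i$ follows by construction. The main obstacle I anticipate is bookkeeping: carefully tracking how $A$, $G$, $H$ transform under the congruence induced by $P$ (which is a \emph{similarity} on $T_{X_+}$ but acts by congruence on $X_+$, so one must use $P^{-1}$ and $P^\ast$ consistently and check that $G, H \geq 0$ is preserved), and verifying that the resulting block coefficient matrices genuinely define DAREs of the stated form — the spectral-separation vanishing arguments themselves are routine once set up correctly.
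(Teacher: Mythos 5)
Your overall route coincides with the paper's: transform by $P$, restrict the Stein-type inequality to the leading $(m_1+m_2)$ block and invoke Lemma~\ref{op1} for part (a), then use spectral separation to block-diagonalize $Y_+=P^\ast X_+P$. However, two genuine gaps remain in part (b). First, your off-diagonal equations are misstated: in the transformed variables the fixed-point equation reads $Y_+=\widehat H+J^\ast Y_+J+J^\ast \widehat G_{Y_+}J$ with $\widehat G_{Y_+}=Y_+\widehat GY_+$ and $\widehat G=P^{-1}GP^{-\ast}$, so the $(1,3)$ block gives $[Y_+]_{1,3}-J_1^\ast [Y_+]_{1,3}J_s=[\widehat H]_{1,3}+J_1^\ast[\widehat G_{Y_+}]_{1,3}J_s$, not just $[\widehat H]_{1,3}$. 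Before the Sylvester/Stein separation argument applies you must show $[\widehat G_{Y_+}]_{1,3}=[\widehat G_{Y_+}]_{2,3}=0$; this does follow from the same leading-block computation (Lemma~\ref{op1} forces $[\widehat H]_{[1,2]}+(J_1\oplus J_u)^\ast[\widehat G_{Y_+}]_{[1,2]}(J_1\oplus J_u)=0$, both summands are positive semidefinite and $J_1\oplus J_u$ is invertible, so $[\widehat G_{Y_+}]_{[1,2]}=0$ and positive semidefiniteness kills the remaining row and column blocks), but your sketch only extracts the conclusion about $H$ and never records the one about $\widehat G_{Y_+}$, which is needed here.

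Second, and more seriously, the claims you defer as bookkeeping (``$T_{X_{+,i}}=J_i$ follows by construction'') are precisely where the paper's proof does its work, and they do not follow from block-diagonality of $Y_+$ alone: the reduced equations $X=R_{\rm d,1}(X)$, $X=R_{\rm d,u}(X)$, $X=R_{\rm d,s}(X)$ are built from the diagonal blocks $[\widehat A]_{i,i},[\widehat G]_{i,i},[\widehat H]_{i,i}$, and nothing in your argument relates $[\widehat A]_{1,1}$ to $J_1$, since a priori $(I+\widehat GY_+)^{-1}\widehat A=J_1\oplus J_u\oplus J_s$ mixes blocks. The paper closes this by (i) a $\widehat G^{1/2}$ factorization argument showing $[\widehat G]_{1,1}[Y_+]_{1,1}=0$ and $[\widehat G]_{[2,3]\times[1]}[Y_+]_{1,1}=0$, which together with $[Y_+]_{2,2}=0$ (Lemma~\ref{op1} with $\rho(J_u)>1$) yields the explicit zero pattern of $\widehat GY_+$ and hence of $(I+\widehat GY_+)^{-1}$; and (ii) a block-by-block comparison of $(I+\widehat GY_+)^{-1}\widehat A$ with $J_1\oplus J_u\oplus J_s$ giving $[\widehat A]_{1,2}=[\widehat A]_{2,1}=0$, $[\widehat A]_{[3]\times[1,2]}=0$, $[\widehat A]_{1,1}=J_1$, $[\widehat A]_{2,2}=J_u$ and $J_s=(I_{m_3}+[\widehat G]_{3,3}[Y_+]_{3,3})^{-1}[\widehat A]_{3,3}$. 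Only with these identities (plus $[\widehat G]_{1,1}[Y_+]_{1,1}=0$, so that the quadratic term drops out of the first reduced equation) do $[Y_+]_{i,i}=R_{{\rm d},i}([Y_+]_{i,i})$ and $T_{X_{+,i}}=J_i$ follow; your proposal never produces them, so part (b) is not yet proved as written.
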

\begin{proof}
   Let $Y_+=P^\ast X_+P$, $\widehat{A}=P^{-1}AP$, {\color{red} $\widehat{G}=P^{-1}GP^{-\ast}\geq 0$}, and $\widehat{H}=P^\ast H P\geq 0$.
Note that $n=m_1+m_2+m_3$. For the sake of convenience, we partition any $\mathcal {A}\in \mathbb{C}^{n\times n}$ as a $3 \times 3$ block matrix $\mathcal{A}=[ [\mathcal A]_{i,j}]$, where $[\mathcal{A}]_{i,j}\in \mathbb{C}^{m_i\times m_j}$ and $1\leq i,j \leq 3$. The notation $[\mathcal{A}]_{[i_1,i_2]\times[j_1,j_2]}$ denotes the block submatrix of $\mathcal{A}$ consisting of the $i_1,i_1+1,\ldots,i_2$ rows and the $j_1,j_1+1,\ldots,j_2$ columns. We use $[\mathcal{A}]_{[i_1,i_2]}$ for short if $i_1=j_1$ and $i_2=j_2$. As a consequence, $[\mathcal{A}]_{i,i}=[\mathcal{A}]_{[i,i]}\geq 0$  {\color{red}if $\mathcal{A} \geq 0$ for $i=1,2,3$.}

{Concerning part (a), the} inequality $X_+\geq R_{\rm d}(X_+)$ is equivalent to the inequality
\begin{align}\label{eq:block}
Y_+ \geq \widehat{H}+J_{T_{X_+}}^\ast Y_+J_{T_{X_+}}+J_{T_{X_+}}^\ast \widehat{G}_{Y_+} J_{T_{X_+}},
\end{align}
where $\widehat{G}_{Y_+}{=} Y_+\widehat{G}Y_+$.  A direct computation of the upper left corner $(m_1+m_2)\times(m_1+m_2)$ block of \eqref{eq:block} yields
\begin{align*}
&[Y_+]_{[1,2]}-(J_{1}\oplus J_{u})^\ast[Y_+]_{[1,2]}(J_{1}\oplus J_{u})\\
&\geq [\widehat{H}]_{[1,2]}+(J_{1}\oplus J_{u})^\ast[\widehat{G}_{Y_+}]_{[1,2]}(J_{1}\oplus J_{u})\geq 0.
\end{align*}

From Lemma~\ref{op1} we have
\begin{align}\label{Y12}
[Y_+]_{[1,2]}=(J_{1}\oplus J_{u})^\ast[Y_+]_{[1,2]}(J_{1}\oplus J_{u}),
\end{align}
{and thus $[\widehat{H}]_{[1,2]}+(J_{1}\oplus J_{u})^\ast[\widehat{G}_{Y_+}]_{[1,2]}(J_{1}\oplus J_{u})=0$, from which we deduce that} $[\widehat{H}]_{[1,2]}$ and $[\widehat{G}_{Y_+}]_{[1,2]}$ are null, and we obtian
\begin{align}\label{HG}
\widehat{H}=\bb
      0_{m_1+m_2} & 0 \\
      0 & [\widehat{H}]_{3,3} \\
    \eb,\ \widehat{G}_{Y_+}=\bb
      0_{m_1+m_2} & 0 \\
      0 & [\widehat{G}_{Y_+}]_{3,3} \\
    \eb.
\end{align}

{Concerning part (b)}, observe that the original DARE \eqref{eq:NMEP} with respect to the unknown $X$ is equivalent to the following equation with respect to the unknown $Y=P^\ast XP$,
\begin{align}\label{eq:reduced}
Y=\widehat{H}+J_{T_X}^\ast YJ_{T_X}+J_{T_X}^\ast \widehat{G}_{Y} J_{T_X}.
\end{align}
We claim that Eq.~\eqref{eq:reduced} has a positive semidefinite solution ${Y}_+$ such that ${Y}_+=[{Y}_{+}]_{1,1}\oplus[{Y}_{+}]_{2,2}\oplus[{Y}_{+}]_{3,3}$  if { there exists a $X_+\geq 0$ such that} {$R_{\rm d}(X_+)=X_+$}, where ${[Y_+]}_{1,1}\in\mathbb{N}_{m_1}$, ${[Y_+]}_{2,2}\in\mathbb{N}_{m_2}$ and ${[Y_+]}_{[3,3]}\in\mathbb{N}_{m_3}$.
Indeed, substituting \eqref{HG} into \eqref{eq:reduced} we have ${[Y_+]}_{[1]\times[2,3]}=J_1^\ast{[Y_+]}_{[1]\times[2,3]}(J_u\oplus J_s)$,
{which implies ${[Y_+]}_{[1]\times[2,3]}=({[Y_+]}_{[2,3]\times[1]})^\ast=0$ since $\bar{\lambda}\mu\neq 1$ for any $\lambda\in\sigma(J_1)$ and $\mu\in\sigma(J_u\oplus J_s)$}.
Combining these with \eqref{Y12} we can assert that $[Y_+]_{1,1}=J_1^\ast [Y_+]_{1,1} J_1$ and $[Y_+]_{2,2}=J_u^\ast [Y_+]_{2,2} J_u$. From Lemma~\ref{op1} it follows easily immediately that $[Y_+]_{2,2}=0$ since $\rho(J_u^{-1})< 1$. We conclude that $Y_+$ is a block diagonal matrix.

On the other hand, since
\begin{align*}
 0&=[\widehat{G}_{Y_+}]_{1,1}=[Y_+ \widehat{G} Y_+]_{1,1}=[(\widehat{G}^{1/2}Y_+)^\ast (\widehat{G}^{1/2}Y_+)]_{1,1}\\
 &=[(\widehat{G}^{1/2}Y_+)^\ast ]_{1,1} [\widehat{G}^{1/2}Y_+]_{1,1}+ {[\widehat{G}^{1/2}Y_+]_{[2,3]\times[1]}^\ast} [\widehat{G}^{1/2}Y_+]_{[2,3]\times[1]}\geq 0,
\end{align*}
it follows that {$[\widehat{G}^{1/2}]_{1,1}[Y_+]_{1,1}=[\widehat{G}^{1/2}Y_+]_{1,1}=0$ and $[\widehat{G}^{1/2}]_{[2,3]\times[1]}[Y_+]_{1,1}=[\widehat{G}^{1/2}Y_+]_{[2,3]\times[1]}=0$.} Thus, we see that

\begin{subequations}\label{GYS}
\begin{align}
[\widehat{G}]_{1,1}[Y_+]_{1,1}&=(([\widehat{G}^{1/2}]_{1,1})^2+[\widehat{G}^{1/2}]_{[1]\times[2,3]}[\widehat{G}^{1/2}]_{[2,3]\times[1]})[Y_+]_{1,1}=0,\label{GYS1}\\
[\widehat{G}]_{[2,3]\times[1]}[Y_+]_{1,1}&=([\widehat{G}^{1/2}]_{[2,3]\times[1]}[\widehat{G}^{1/2}]_{1,1}+[\widehat{G}^{1/2}]_{[2,3]\times[2,3]}[\widehat{G}^{1/2}]_{[2,3]\times[1]}) [Y_+]_{1,1}=0.\label{GYS2}
\end{align}
\end{subequations}
 It implies that the matrix $\widehat{G}Y_+$ can be partitioned according to the block structure
\begin{align*}
\widehat{G}Y_+&=\bb
           [\widehat{G}]_{1,1}[Y_+]_{1,1} & [\widehat{G}]_{[1]\times[2,3]}[Y_+]_{[2,3]} \\
           [\widehat{G}]_{[2,3]\times[1]}[Y_+]_{1,1} & [\widehat{G}]_{[2,3]}[Y_+]_{[2,3]} \\
        \eb\\
        &=
        \bb
           0_{m_1} & [\widehat{G}]_{[1]\times[2,3]}[Y_+]_{[2,3]} \\
           0_{(m_2+m_3)\times m_1} & [\widehat{G}]_{[2,3]}[Y_+]_{[2,3]} \\
        \eb
        =\bb
           0_{m_1} & 0_{m_1\times m_2}&[\widehat{G}]_{1,3}[Y_+]_{[3,3]} \\
           0_{m_2\times m_1}& 0_{m_2}& [\widehat{G}]_{2,3}[Y_+]_{[3,3]} \\
           0_{m_3\times m_1}& 0_{m_3\times m_2}& [\widehat{G}]_{3,3}[Y_+]_{[3,3]}
        \eb.
\end{align*}
It follows that
\begin{align*}
(I_n+\widehat{G}Y_+)^{-1}=
        \bb
           I_{m_1} & 0_{m_1\times m_2}&-[\widehat{G}]_{1,3}[Y_+]_{[3,3]}(I_{m_3}+[\widehat{G}]_{3,3}[Y_+]_{3,3})^{-1} \\
           0_{m_2\times m_1}& I_{m_2}& -[\widehat{G}]_{2,3}[Y_+]_{[3,3]}(I_{m_3}+[\widehat{G}]_{3,3}[Y_+]_{3,3})^{-1} \\
           0_{m_3\times m_1}& 0_{m_3\times m_2}& (I_{m_3}+[\widehat{G}]_{3,3}[Y_+]_{3,3})^{-1}
        \eb.
\end{align*}
{Observe that}
\begin{align}\label{Jordan}
J_{{T_{X_+}}}=J_1 \oplus J_u \oplus J_s
&=P^{-1}(I+GX_+)^{-1}AP=(I_n+\widehat{G}Y_+)^{-1}\widehat{A},
\end{align}
Compared the $(3,1)$ and $(3,2)$ positions with two sides of \eqref{Jordan}, we obtain
\begin{align*}
(I_{m_3}+[\widehat{G}]_{3,3}[Y_+]_{3,3})^{-1} [\widehat{A}]_{[3]\times[1,2]}&=0_{m_3\times (m_1+m_2)},
\end{align*}
and we deduce that $[\widehat{A}]_{[3]\times[1,2]}=0_{m_3\times (m_1+m_2)}$.
Similarly, compared the $(2,1)$ and $(1,2)$ positions with two sides of \eqref{Jordan} immediately lead to $[\widehat{A}]_{[1,2]}=0_{m_1\times m_2}$ and $[\widehat{A}]_{[2,1]}=0_{m_2\times m_1}$. That is, $\widehat{A}$ is an upper triangular block matrix.
Comparing block matrices $(1,1)$, $(2,2)$ and $(3,3)$ of two sides of \eqref{Jordan} yields
\begin{align}\label{eq:w}
J_1=[\widehat{A}]_{1,1},\,
J_{u}=[\widehat{A}]_{2,2},\,  J_{s}=(I_{m_3}+[\widehat{G}]_{3,3}[Y_+]_{3,3})^{-1}[\widehat{A}]_{3,3}.
\end{align}

Let
\begin{align*}
R_{\rm d,1}([Y_+]_{1,1})&:=[\widehat{H}]_{1,1}+([\widehat{A}]_{1,1})^\ast
[Y_+]_{1,1}(I_{m_1}+[\widehat{G}]_{1,1}[Y_+]_{1,1})^{-1}[\widehat{A}]_{1,1},\\
R_{\rm d,u}([Y_+]_{2,2})&:=[\widehat{H}]_{2,2}+([\widehat{A}]_{2,2})^\ast
[Y_+]_{2,2}(I_{m_3}+[\widehat{G}]_{2,2}[Y_+]_{2,2})^{-1}[\widehat{A}]_{2,2},\\
R_{\rm d,s}([Y_+]_{3,3})&:=[\widehat{H}]_{3,3}+([\widehat{A}]_{3,3})^\ast [Y_+]_{3,3}(I_{m_3}+[\widehat{G}]_{3,3}[Y_+]_{3,3})^{-1}[\widehat{A}]_{3,3}.
\end{align*}
Summarizing, together with $[\widehat{H}]_{[1,2]}=0_{m_1+m_2}$, $[Y_+]_{2,2}=0_{m_2}$, \eqref{GYS1}, \eqref{eq:reduced} and \eqref{eq:w} we can now formulate our main results in part~(b):
 {
 \begin{align*}
[Y_+]_{1,1}&=J_1^\ast[Y_+]_{1,1}J_1=R_{\rm d,1}([Y_+]_{1,1}),\\
[Y_+]_{2,2}&=J_u^\ast[Y_+]_{2,2}J_u=R_{\rm d,u}([Y_+]_{2,2}),\\
[Y_+]_{3,3}&=[\widehat{H}]_{3,3}+J_{s}^\ast[Y_+]_{3,3}(I_{m_3}+[\widehat{G}]_{3,3}[Y_+]_{3,3} )J_{s}
=R_{\rm d,s}([Y_+]_{3,3}),
\end{align*}
}
with $T_{[Y_+]_{1,1}}=J_1$, $T_{[Y_+]_{2,2}}=J_u$ and $T_{[Y_+]_{3,3}}=J_s$.

\end{proof}

Now, we are ready to present the main result of this subsection. The following theorem gives a sharper bound on the convergent speed of the fixed point iteration, which works both for the regular case and the critical case.
{
\begin{Theorem}\label{solution}
 Assume that $\mathbb{R}_{\geq}\neq \phi$ and $H\neq 0$. Then, the sequence $X_{k+1}=R_{\rm d}(X_k)$ with an initial matrix $X_1=H$ converges R-linearly to the minimal positive semidefinite solution $X_\star$ of \eqref{eq:NMEP}. Moreover,
the convergence rate can be shown as the following:
\[
\underset{k\rightarrow \infty}{\lim {\rm sup}} \sqrt[k]{\|X_k-X_\star\|}\leq \max\{|\lambda|^2; \lambda\in\sigma(T_{{X}_{\star}})\cap {\mathbb{D}}\}<1.
\]
Note that the $X_\star=X_k=0$ for all $k\geq 1$ and $\rho(T_{{X}_{\star}})=\rho(A)$ when $H=0$.
\end{Theorem}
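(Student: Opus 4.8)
The plan is to combine the monotone-convergence machinery of Lemma~\ref{lem-sayed} with the spectral decomposition provided by Lemma~\ref{lem3} applied to the limit $X_\star$. First I would invoke part~1 of Lemma~\ref{lem-sayed}: since $\mathbb{R}_{\geq}\neq\phi$ and $X_1=H\in[0,H]$, the sequence $\{X_k\}$ increases monotonically and converges to the minimal positive semidefinite solution $X_\star$. The degenerate case $H=0$ is immediate: then $X_1=0$ and $R_{\rm d}(0)=0$, so $X_k=0=X_\star$ for all $k$, and $T_{X_\star}=(I+G\cdot 0)^{-1}A=A$, so $\rho(T_{X_\star})=\rho(A)$; this handles the final sentence. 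Henceforth assume $H\neq 0$, hence $X_\star\neq 0$.

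Next I would apply Lemma~\ref{lem3} to $X_+=X_\star$ (which lies in $\mathbb{R}_{\geq}$ and satisfies $R_{\rm d}(X_\star)=X_\star$). Writing $J_{T_{X_\star}}=J_1\oplus J_u\oplus J_s$ in Jordan form via $P$, part~(b) tells us that in the congruent coordinates $Y=P^\ast XP$ the equation block-diagonalizes into three DAREs $R_{\rm d,1}$, $R_{\rm d,u}$, $R_{\rm d,s}$ whose closed-loop matrices are exactly $J_1$, $J_u$, $J_s$, and part~(a) tells us $[\widehat H]_{[1,2]}=0$, so $\widehat H=0_{m_1+m_2}\oplus[\widehat H]_{3,3}$ with $[\widehat H]_{3,3}=\widehat H_s$. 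The key point is that the fixed-point iteration $X_{k+1}=R_{\rm d}(X_k)$ with $X_1=H$ respects this block structure: setting $Y_k=P^\ast X_k P$, one checks $Y_1=\widehat H=0\oplus 0\oplus\widehat H_s$, and since $[\widehat A]$ is block upper triangular and $\widehat G Y_k$ has the displayed block-triangular form (by induction each $Y_k$ is block diagonal with first two blocks zero), the iteration on the first two blocks stays identically zero and the iteration on the third block is precisely $[Y_{k+1}]_{3,3}=R_{\rm d,s}([Y_k]_{3,3})$ with $[Y_1]_{3,3}=\widehat H_s$. Thus $\|X_k-X_\star\|\le c\,\|[Y_k]_{3,3}-[Y_\star]_{3,3}\|$ for a constant $c$ depending on $P$, and it suffices to analyze the reduced iteration $R_{\rm d,s}$.

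For the reduced iteration, the associated closed-loop matrix is $T_{[Y_\star]_{3,3}}=J_s$ with $\rho(J_s)<1$, so I would apply Lemma~\ref{lem4} to the sequence $\{[Y_k]_{3,3}\}$ (which converges to $[Y_\star]_{3,3}$), obtaining
\[
\limsup_{k\to\infty}\sqrt[k]{\,\|[Y_\star]_{3,3}-[Y_k]_{3,3}\|\,}\le\rho(J_s)^2<1.
\]
Since $\sigma(J_s)=\sigma(T_{X_\star})\cap\mathbb D$ by construction in Lemma~\ref{lem3}, $\rho(J_s)^2=\max\{|\lambda|^2;\lambda\in\sigma(T_{X_\star})\cap\mathbb D\}$, and $\rho(J_s)^2<1$ since $J_s$ is discrete-time asymptotically stable. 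Because $\|X_k-X_\star\|\le c\,\|[Y_k]_{3,3}-[Y_\star]_{3,3}\|$, taking $k$-th roots and $\limsup$ (the constant $c^{1/k}\to1$) transfers the same bound to $\|X_k-X_\star\|$, giving R-linear convergence with the claimed rate; R-linearity in the sense of Definition~\ref{def:cov} follows since the bound is $<1$.

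The main obstacle I anticipate is rigorously justifying that the iteration $X_{k+1}=R_{\rm d}(X_k)$ with $X_1=H$ really does decouple along the congruence blocks furnished by Lemma~\ref{lem3} — i.e.\ proving by induction that each $Y_k=P^\ast X_k P$ is block diagonal with vanishing first two blocks, and that the surviving third block obeys exactly $R_{\rm d,s}$. This requires reusing the block-triangular structure of $\widehat A$ and the Woodbury-type expansion of $(I_n+\widehat G Y_k)^{-1}$ from the proof of Lemma~\ref{lem3}, now at every finite step rather than only at the fixed point; one must verify that $[\widehat G]_{1,1}[Y_k]_{1,1}=0$ and the off-diagonal conditions persist under one application of $R_{\rm d,s}$-type maps, which follows from $[Y_k]_{1,1}=[Y_k]_{2,2}=0$ but needs to be spelled out. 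Once this structural reduction is in hand, the rest is a direct appeal to Lemma~\ref{lem4} and Lemma~\ref{lem-sayed}.
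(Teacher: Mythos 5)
Your proposal is correct and follows essentially the same route as the paper's own proof: convergence to the minimal solution via part~1 of Lemma~\ref{lem-sayed}, the congruence/Jordan block reduction of Lemma~\ref{lem3} applied to $X_\star$ so that the iterates decouple with the first two blocks identically zero (forced by $\widehat H=0_{m_1+m_2}\oplus\widehat H_s$), and Lemma~\ref{lem4} applied to the stable block $J_s$ to get the rate $\rho(J_s)^2=\max\{|\lambda|^2:\lambda\in\sigma(T_{X_\star})\cap\mathbb D\}$. The induction you flag as the main obstacle (block-diagonality of $Y_k=P^\ast X_kP$ at every step, using the block upper triangular $\widehat A$ and the structure of $(I+\widehat GY_k)^{-1}$) is exactly the step the paper asserts without detail, so spelling it out as you propose is consistent with, and slightly more careful than, the published argument.
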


\begin{proof}
In view of part 1 of Lemma \ref{lem-sayed}, the sequence $\{X_k\}$ converges to the minimal positive semidefinite solution of equation \eqref{eq:NMEP}.  It is left to prove that the convergence is R-linear. Let $T_{X_{\star}}=PJ_{T_{X_{\star}}}P^{-1}$ be the Jordan canonical decomposition of $T_{X_{\star}}=(I+GX_{\star})^{-1}A$. Set $\widehat{X}_k = P^\ast X_k P$, we have
\begin{align}
\widehat{X}_{k+1}=\widehat{H}+\widehat{A}^\ast\widehat{X}_k(I+\widehat{G}\widehat{X}_k)^{-1}\widehat{A}.
\end{align}
Together with $\widehat{X}_1 =\widehat{H}=0_{m_1+m_2}\oplus [H]_{3,3}$, we derive that the sequence $\widehat{X}_k=\widehat{X}_{k,1}\oplus\widehat{X}_{k,u}\oplus \widehat{X}_{k,s}$ is divided three sequences according to
\begin{align*}
\widehat{X}_{k+1,1}&=R_{\rm 1}(X_{k,1})=[\widehat{H}]_{1,1}+[\widehat{A}]_{1,1}^\ast\widehat{X}_{k,1}(I_{m_1}+[\widehat{G}]_{1,1}\widehat{X}_{k,1})^{-1}[\widehat{A}]_{1,1},\\
\widehat{X}_{k+1,u}&=R_{\rm u}(X_{k,u})=[\widehat{H}]_{2,2}+[\widehat{A}]_{2,2}^\ast\widehat{X}_{k,u}(I_{m_2}+[\widehat{G}]_{2,2}\widehat{X}_{k,u})^{-1}[\widehat{A}]_{2,2},\\
\widehat{X}_{k+1,s}&=R_{\rm s}(X_{k,s})=[\widehat{H}]_{3,3}+[\widehat{A}]_{3,3}^\ast\widehat{X}_{k,s}(I_{m_3}+[\widehat{G}]_{2,2}\widehat{X}_{k,s})^{-1}[\widehat{A}]_{3,3}.
\end{align*}
The conditions $\widehat{X}_{1,1}=[\widehat{H}]_{1,1}=0$ and $\widehat{X}_{1,u}=[\widehat{H}]_{1,1}=0$ implies that $\widehat{X}_{k,1}=0$ and $\widehat{X}_{k,u}=0$  for all positive integers $k$. By the way, $\widehat{X}_k=\widehat{X}_{k,1}=0$ if $H=0$.

 Let $H$ be a nonzero positive semidefinite matrix. Under the hypotheses of Lemma~\ref{lem3}, there exists a positive definite solution $\widehat{X}_{\star,s}$ of DARE $X=R_{\rm d,s}(X)$ and $\rho(T_{\widehat{X}_{\star,s}})<1$. Overall,
let $X_\star=P^{-H}(\lim\limits_{k\rightarrow \infty} \widehat{X}_k) P^{-1}=P^{-H}( 0\oplus \widehat{X}_{\star,s} )P^{-1}$. Applying Lemma~\ref{lem4} we have
\begin{align*}
\underset{k\rightarrow \infty}{\lim {\rm sup}} \sqrt[k]{\|X_k-X_\star|}&=\underset{k\rightarrow \infty}{\lim {\rm sup}} \sqrt[k]{\|P^{-H}(0\oplus\widehat{X}_{k,s}-0\oplus\widehat{X}_{\star,s})P^{-1}\|}\\& \leq\rho(T_{\widehat{X}_{\star,s}})=\max\{|\lambda|^2; \lambda\in\sigma(T_{{X}_{\star}})\cap {\mathbb{D}}\}.
\end{align*}
\end{proof}
}
\begin{Remark}
Let $n=1$, $G>0$ and $H=0$. Namely, Eq.~\eqref{eq:NMEP} has two positive semidefinite solutions $X_1=0$ with $T_{X_1}=A$ and $X_2=\frac{|A|^2-1}{G}$ with $T_{X_2}=\frac{1}{|A|}$.
In this case, $X_k=X_1=H=0$ for all $k$ and $\rho(T_{X_\star})=|A|$ can be made arbitrarily large.
\end{Remark}
The following result provides a {sufficient condition under which the sequence $\{X_k\}$ converges $R$-linearly. }
\begin{Corollary}\label{thm:pdd}
Assume that there exists a positive definite solution $X=X_+$ of \eqref{eq:NMEP}.
Then, $X_k$ converges R-linearly to the minimal positive semidefinite solution $X_\star$ of Eq.\eqref{eq:NMEP} such that
$T_{X_\star}$ is discrete-time Lyapunov stable.
\end{Corollary}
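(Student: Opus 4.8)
The plan is to split the statement into its two assertions and dispatch the convergence part by a direct appeal to the earlier results, then concentrate on the stability part. Since $X_+>0$ solves \eqref{eq:NMEP} we have $X_+=R_{\rm d}(X_+)$, so $X_+\in\mathbb{R}_{\geq}$ and in particular $\mathbb{R}_{\geq}\neq\phi$; when $H\neq0$ this is exactly the hypothesis of Theorem~\ref{solution}, which yields that $X_{k+1}=R_{\rm d}(X_k)$ with $X_1=H$ converges R-linearly to the minimal positive semidefinite solution $X_\star$, with $\limsup_k\sqrt[k]{\|X_k-X_\star\|}\le\max\{|\lambda|^2:\lambda\in\sigma(T_{X_\star})\cap\mathbb{D}\}<1$. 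The case $H=0$ is the degenerate one of the Remark following Theorem~\ref{solution} ($X_k\equiv X_\star=0$, $T_{X_\star}=A$), so I would state the corollary under $H\neq0$. Everything thus reduces to showing that $T_{X_\star}$ is discrete-time Lyapunov stable.

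For the stability claim I would use the criterion in part~2 of Lemma~\ref{op}: it suffices to produce $W\in\mathbb{P}_n$ with $S_{T_{X_\star}}(W)=W-T_{X_\star}^\ast W T_{X_\star}\ge0$, and the natural candidate is $W=X_+$. Two ingredients feed this. First, subtracting the Riccati identities for $X_+$ and $X_\star$ and using $X(I+GX)^{-1}=(I+XG)^{-1}X$ (as in the proof of Lemma~\ref{lem-sayed}) gives the Sylvester-type relation $X_+-X_\star=T_{X_+}^\ast(X_+-X_\star)T_{X_\star}$, hence $X_+-X_\star=(T_{X_+}^\ast)^k(X_+-X_\star)T_{X_\star}^k$ for every $k$. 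Second, $X_+>0$ together with $S_{T_{X_+}}(X_+)=H+T_{X_+}^\ast X_+GX_+T_{X_+}\ge0$ gives, via part~2 of Lemma~\ref{op}, that $T_{X_+}$ is itself Lyapunov stable, so $\sup_k\|T_{X_+}^k\|<\infty$. A parallel, more structural route is to apply Lemma~\ref{lem3} in coordinates $P$ diagonalizing $T_{X_+}$: Lyapunov stability of $T_{X_+}$ makes $J_{T_{X_+}}=J_1\oplus J_s$ with $J_1$ diagonal unimodular and $\rho(J_s)<1$ (no unstable block), and since $P^\ast X_+P$ is the invertible block-diagonal matrix of part~(b) of Lemma~\ref{lem3}, the off-diagonal blocks of $\widehat{G}=P^{-1}GP^{-\ast}$ vanish, $\widehat{A}=P^{-1}AP$ is block upper triangular, and $[\widehat{H}]_{1,1}=0$. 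The iteration from $\widehat{X}_1=\widehat{H}$ then stays block diagonal with null unimodular block, so $X_\star$ is congruent to $0\oplus\widehat{X}_{\star,s}$, $P^{-1}T_{X_\star}P=\bb J_1 & B\\ 0 & T_{\widehat{X}_{\star,s}}\eb$, and $\widehat{X}_{\star,s}$ is the minimal positive semidefinite solution of a smaller DARE that still possesses a stabilizing positive definite solution (closed-loop $J_s$, $\rho(J_s)<1$). Lyapunov stability of $T_{X_\star}$ would then follow from that of $J_1$ and of $T_{\widehat{X}_{\star,s}}$, provided the block-triangular coupling adds no nontrivial Jordan structure at shared unimodular eigenvalues.

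The step I expect to be the real obstacle is precisely this final reduction: showing that the minimal positive semidefinite solution of a DARE that admits a stabilizing positive definite solution has Lyapunov-stable closed loop. The eigenvector argument applied to $X_+-X_\star=(T_{X_+}^\ast)^k(X_+-X_\star)T_{X_\star}^k$ only shows that, if $T_{X_\star}v=\lambda v$ with $|\lambda|\ge1$, then $(X_+-X_\star)v$ is an eigenvector of $T_{X_+}^\ast$ for $\bar\lambda^{-1}\in\bar{\mathbb{D}}$, which is not by itself contradictory. To close it I would exploit positive definiteness of $X_+$ more forcefully: from Theorem~\ref{solution} and Lemma~\ref{lem3} the set $\ker X_\star$ is exactly the non-stable $T_{X_\star}$-invariant subspace, so $X_+-X_\star$ is positive definite there; I would marry this to an inertia argument for the reduced Stein equation in part~(a) of Lemma~\ref{lem3}, together with the fact that $X_\star$ is the specific limit of the iteration started at $H$ rather than an arbitrary solution. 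At this point I would also double-check whether "there exists a positive definite solution" is by itself sufficient, or whether a detectability-type hypothesis on $(A,H)$ is tacitly in force, since both the structural reduction above and the shape of any would-be counterexample turn on exactly that question.
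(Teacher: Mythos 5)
Your two completed ingredients are, in fact, the paper's entire proof of this corollary: from $X_+=R_{\rm d}(X_+)>0$ one gets $X_+\in\mathbb{R}_{\geq}$, hence $\mathbb{R}_{\geq}\neq\phi$ and the R-linear convergence of $\{X_k\}$ to $X_\star$ by Theorem~\ref{solution}; and from $X_+-T_{X_+}^\ast X_+T_{X_+}=H+T_{X_+}^\ast X_+GX_+T_{X_+}\geq 0$ (the paper writes $S_A(X_+)$, but the computation is the one for $S_{T_{X_+}}$ displayed just before Theorem 3.1) together with part 2 of Lemma~\ref{op} and $X_+>0$, one gets that $T_{X_+}$ is discrete-time Lyapunov stable. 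The paper stops there: it never transfers Lyapunov stability from $T_{X_+}$ to $T_{X_\star}$, so the clause about $T_{X_\star}$ in the statement is not actually established by the paper's proof either. In that sense you are not missing an idea the paper has; your first paragraph and your ``second ingredient'' already reproduce everything the paper does.

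The ``final reduction'' you correctly flagged as the real obstacle is not just hard, it is unprovable from the stated hypotheses, and your suspicion that a detectability-type assumption on $(A,H)$ is tacitly needed is right. A concrete counterexample is obtained from the paper's own Remark after Theorem~\ref{solution}: take the scalar data $h_1=0$, $g_1>0$, $|a_1|>1$, whose positive semidefinite solutions are $0$ (closed loop $a_1$) and $(|a_1|^2-1)/g_1$ (closed loop $1/|a_1|$), and form the direct sum with a second scalar block having $h_2>0$, $g_2>0$. Then $H=0\oplus h_2\neq 0$, a positive definite solution $X_+=\bigl((|a_1|^2-1)/g_1\bigr)\oplus x_{2,+}$ exists, but the iteration started at $X_1=H$ stays $0$ on the first block, so the minimal positive semidefinite solution is $X_\star=0\oplus x_{2,\star}$ with $\rho(T_{X_\star})=|a_1|>1$; $T_{X_\star}$ is not Lyapunov stable even though $T_{X_+}$ is. So do not try to close that step (your identity $X_+-X_\star=T_{X_+}^\ast(X_+-X_\star)T_{X_\star}$ is correct but, as you noted, yields no contradiction); the conclusions actually supported by the hypotheses are exactly the two you proved, namely R-linear convergence to $X_\star$ and Lyapunov stability of the closed loop of the positive definite solution $X_+$, and strengthening the statement to $T_{X_\star}$ requires an additional assumption of the kind you anticipated.
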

\begin{proof}
Suppose that there exists $X_+>0$ such that $X_+\in \mathbb{R}_{\geq}$.
Observe that $S_A(X_+)=H+T_{X_+}^\ast X_+GX_+T_{X_+}\geq 0$. From Lemma~\ref{op} we know that $\rho(T_{X_+})\leq 1$ and all the unimodular eigenvalues are semisimple. Consequently, $\mathbb{R}_\geq\neq\phi$ and thus $X_+$ converges to
$X_\star$ R-linearly by Theorem~\ref{solution}.
\end{proof}

\section{{An accelerated iteration} and { numerical} experiments}
In this section, for any integer $r>1$, we first show that an accelerated of fixed-point iteration (referred as AFPI) with R-superlinear convergence order $r$ is capable of computing the minimal positive semidefinite solution of equation \eqref{eq:NMEP}. It has been proved in \cite{chiang20,Chiang17} that if $\rho(T_{X_\star})<1$ the AFPI has convergence rate of any desired order $r$. We verify that the convergence speed remains invariant even $\rho(T_{X_\star})\geq 1$. It is worth mentioning that AFPI includes SDA as a special $r=2$ case~\cite{chiang20}, so that the quadratic convergence of SDA when $\rho(T_{X_\star})\geq 1$ still holds and this acts as a complementary to the existing results on the convergence of SDA.

Two numerical examples are then demonstrated to test the
accuracy of the computation and the convergence speed of AFPI under different situation; the first one show that the proposed algorithm converges suplinearly with no difficulty in the value of $\rho{(T_{X_\star})}$ less than, greater than, or equal to 1, respectively. The latter example comes from \cite{HUANG20091452} which consider a DARE~\eqref{eq:NMEP} with a negative definite matrix $H$. It is interesting to observe that our approach is still valid.
\subsection{Acceleration of fixed-point iteration}\label{Sec:acc}
The following definition characterizes the semigroup property of the iteration associated a binary operator.
\begin{Definition}~\cite{chiang20}\label{def1}
Let $D\subseteq \mathbb{C}^{n\times m}$ and $F:D\times D\rightarrow D$ be a binary matrix operator. We call that an iteration
\begin{equation}\label{eqF3}
\mathbb{X}_{k+1}=F(\mathbb{X}_k,\mathbb{X}_1), \quad k\geq 1,
\end{equation}
has the semigroup property if the operator $F$ satisfies the following associative rule:
\begin{align}\label{ar}
F(F(Y,Z),W)=F(Y,F(Z,W)),
\end{align}
for any $Y,Z$ and $W$ in $D$.
\end{Definition}
 It is interesting to point out that the sequence $\{{\mathbb{X}}_{k}\}$ satisfies the so-called discrete flow property~\cite{chiang20}[Theorem 3.2], that is,
\begin{align}\label{DF}
{\mathbb{X}}_{k+\ell}=F({\mathbb{X}}_{k},{\mathbb{X}}_{\ell}),
\end{align}
for any two positive integers $k$ and $\ell$.
%
%
Now, we construct a fixed point iteration that has the semigroup property so that the fixed point iteration can be accelerated by applying the procedure as in ~\cite{chiang20}[Algorithm 3.1]. To this end, we have
 \begin{align*}
 X = R_{\rm d}^{(k)}(R_{\rm d}(X))= R_{\rm d}^{(k+1)}(X) = H_{k+1}+A_{k+1}^\ast {X} (I+ G_{k+1}{X})^{-1} A_{k+1},
 \end{align*}
  $A_k$, $G_k$, and $H_k$, for $k = 1,2,\ldots$, are matrices given by the following iteration
\begin{eqnarray*}
\bb A_{k+1} \\ G_{k+1}\\ H_{k+1}\eb=F(\bb A_{k} \\ G_{k}\\ H_{k}\eb):=
\bb A_1\Delta_{G_{k},{H_1}}A_{k}\\G_1+A_1\Delta_{G_{k},{H_1}}G_{k}A_1^\ast\\H_{k}+A_{k}^\ast  H_1\Delta_{G_{k},{H_1}}A_{k}\eb,
\end{eqnarray*}
where $G_1=G\geq 0$, $H_1=H\geq 0$, $A_1=A$, and $\Delta_{G_{k},{H_1}} = I+G_{k}H_1$. An induction argument proves that $G_k\geq 0$ and $H_k\geq 0$ and thus $I+G_{k}H_1$ is
invertible. Therefore, the sequence of matrices $\mathbb{X}_k=\bb A_{k}^\top & G_{k}^\top & H_{k}^\top \eb^\top$ generated by
\begin{align}\label{aa5}
  \mathbb{X}_{k+1}=F(\mathbb{X}_{k},\mathbb{X}_{1}),
\end{align}
is well defined with $\mathbb{X}_1=\bb A_{1}^\top & G_{1}^\top & H_{1}^\top \eb^\top$ for $k\geq 1$.

Let $\mathbb{K}_n:=\mathbb{C}_n \times\mathbb{N}_n \times \mathbb{N}_n$.
We consider the mapping $F:\mathbb{K}_n \times \mathbb{K}_n \rightarrow \mathbb{K}_n$ as an action defined by
  \begin{align}\label{type1b}
  F(Y,Z)=\bb Z_{1}\Delta_{Y_2,Z_3} Y_{1} \\ Z_{2}+Z_{1}\Delta_{Y_2,Z_3} Y_{2}Z_{1}^\ast \\
  Y_{3}+Y_{1}^\ast Z_{3}\Delta_{Y_2,Z_3} Y_{1}\eb,
\end{align}
where $Y=\bb Y_{1}^\top & Y_{2}^\top & Y_{3}^\top \eb^\top$, $Z=\bb Z_{1}^\top & Z_{2}^\top & Z_{3}^\top \eb^\top\in\mathbb{K}_n$ and $\Delta_{Y_2,Z_3}=(I+Y_{2}Z_{3})^{-1}$. Note that $\Delta_{Y_2,Z_3}$ is well defined since all eigenvalues of $I+Y_{2}Z_{3}$ are positive. It has been proved in ~\cite[Example 4.4]{chiang20} that the binary operator $F$ satisfies the associative rule~\eqref{ar} holds, that is, the iteration $\mathbb{X}_{k+1}=F(\mathbb{X}_k, \mathbb{X}_1)$ has the semigroup property. As a consequence, ${\mathbb{X}}_{k+1}=F(\mathbb{X}_{1},\mathbb{X}_k)$ for any positive integers $k$ and we deduce that
\[
H_{k+1} = H_{1}+A_{1}^\ast H_k\Delta_{G_{1},{H_k}}A_{1}.
\]
from which we see that the sequence $\{ H_k \}$ {coincides} with the sequence $\{ X_k \}$ {generated by the fixed point iteration $X_{k+1}=R_{\rm d}(X_k)$} since $X_1=H_1=H$. Applying the discrete flow property~\eqref{DF} to the iteration \eqref{aa5} we obtain the following accelerated fixed point iteration (AFPI).

\begin{Algorithm} \label{aa3}
{\emph{(An accelerated of fixed-point iteration (AFPI) to solve DARE.~\eqref{eq:NMEP})}}
\begin{enumerate}

\item {Given a positive integer $r>1$, let $\widehat{\mathbb{X}}_1={\mathbb{X}}_1=\bb A_{1}^\top & G_{1}^\top & H_{1}^\top \eb^\top$;}
\item {For} $k= 1,\ldots,$ iterate
 \begin{align*}
\widehat{\mathbb{X}}_{k+1}& =F(\widehat{\mathbb{X}}_k,\mathbb{X}_{k}^{(r-1)}),
\end{align*}
    until convergence, where $F$ is defined in \eqref{type1b} and $\mathbb{X}_{k}^{(r-1)}$ is defined in step 3.
\item
     {For} $\ell=1,\ldots,r-2$, iterate
   \begin{align*}
\mathbb{X}_{k}^{(\ell+1)}& =F(\widehat{\mathbb{X}}_k,\mathbb{X}_{k}^{(\ell)}),
\end{align*}
with $\mathbb{X}_{k}^{(1)}=\widehat{\mathbb{X}}_k=\bb \widehat{A}_{k}^\top & \widehat{G}_{k}^\top & \widehat{H}_{k}^\top \eb^\top$.
 \end{enumerate}
\end{Algorithm}
Concerning Algorithm~\ref{aa3}, worth mentioning is that it can be reduced to SDA iteration when $r=2$~\cite{WWL2006,Huang2018}. The following result provides the same sufficient conditions in Theorem~\ref{solution} that guarantees the R-superlinear convergence of the sequence $\{\widehat{H}_k\}$.
\begin{Theorem}\label{lem:conv}
Assume that $\mathbb{R}_{\geq}\neq \phi$ and $H\neq 0$. Then, the sequence $\{\widehat{H}_k\}$ generated by Algorithm~\ref{aa3} converges R-superlinearly to the minimal positive semidefinite solution $X_\star$ of \eqref{eq:NMEP}. Moreover,
the convergence rate can be shown as the following.
\[
\underset{k\rightarrow \infty}{\lim {\rm sup}} \sqrt[r^k]{\|\widehat{H}_k-X_\star\|}\leq \max\{|\lambda|^2; \lambda\in\sigma(T_{{X}_{\star}})\cap {\mathbb{D}}\}.
\]
\end{Theorem}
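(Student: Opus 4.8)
The plan is to reduce the R-superlinear convergence of Algorithm~\ref{aa3} to the R-linear convergence already established in Theorem~\ref{solution} by exploiting the discrete flow property~\eqref{DF}. First I would record the key structural identity: because the binary operator $F$ in \eqref{type1b} satisfies the associative rule, the sequence $\{\mathbb{X}_k\}$ generated by $\mathbb{X}_{k+1}=F(\mathbb{X}_k,\mathbb{X}_1)$ enjoys $\mathbb{X}_{k+\ell}=F(\mathbb{X}_k,\mathbb{X}_\ell)$, and its third component $\{H_k\}$ coincides with the fixed-point iterates $\{X_k\}$ of $X_{k+1}=R_{\rm d}(X_k)$ with $X_1=H$ (this was shown just above the statement). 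The accelerated scheme in Algorithm~\ref{aa3} is, by construction, a ``time-skipping'' of this flow: one verifies by induction on $k$ that $\widehat{\mathbb{X}}_k=\mathbb{X}_{r^{k-1}}$. Indeed $\widehat{\mathbb{X}}_1=\mathbb{X}_1$, and the inner loop in step 3 produces $\mathbb{X}_k^{(\ell)}=F(\widehat{\mathbb{X}}_k,\dots)$ iterated so that $\mathbb{X}_k^{(r-1)}=\mathbb{X}_{(r-1)r^{k-1}}$, whence step 2 gives $\widehat{\mathbb{X}}_{k+1}=F(\mathbb{X}_{r^{k-1}},\mathbb{X}_{(r-1)r^{k-1}})=\mathbb{X}_{r^{k-1}+(r-1)r^{k-1}}=\mathbb{X}_{r^k}$, using \eqref{DF}. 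In particular the third component satisfies $\widehat{H}_k=H_{r^{k-1}}=X_{r^{k-1}}$.

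Next I would transfer the rate. By Theorem~\ref{solution}, since $\mathbb{R}_{\geq}\neq\phi$ and $H\neq 0$, the sequence $\{X_k\}$ converges to the minimal positive semidefinite solution $X_\star$ with
\[
\limsup_{k\to\infty}\sqrt[k]{\|X_k-X_\star\|}\leq \beta:=\max\{|\lambda|^2;\ \lambda\in\sigma(T_{X_\star})\cap\mathbb{D}\}<1.
\]
Writing $\widehat{H}_k=X_{r^{k-1}}$ and taking the $r^k$-th root,
\[
\sqrt[r^k]{\|\widehat{H}_k-X_\star\|}=\Bigl(\sqrt[r^{k-1}]{\|X_{r^{k-1}}-X_\star\|}\Bigr)^{1/r}\longrightarrow \beta^{1/r}\ \text{at most},
\]
more precisely $\limsup_{k\to\infty}\sqrt[r^k]{\|\widehat{H}_k-X_\star\|}\leq \beta^{1/r}\leq \beta<1$ because $\beta<1$ and $r>1$. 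So the claimed bound $\limsup_{k\to\infty}\sqrt[r^k]{\|\widehat{H}_k-X_\star\|}\leq\beta$ follows immediately, and in fact with the sharper exponent $1/r$; stating it as $\leq\beta$ keeps it uniform in $r$ and matches Definition~\ref{def:cov} for R-superlinear convergence of order $r$.

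The main obstacle, and the step deserving the most care, is the bookkeeping identity $\widehat{\mathbb{X}}_k=\mathbb{X}_{r^{k-1}}$: one must check that the indices produced by the nested loops in Algorithm~\ref{aa3} really telescope to powers of $r$, which requires a clean double induction (on $k$, and inside it on $\ell$ for the partial products $\mathbb{X}_k^{(\ell)}=\mathbb{X}_{\ell r^{k-1}}$) together with repeated use of the associativity/discrete-flow property to collapse $F(\mathbb{X}_a,\mathbb{X}_b)=\mathbb{X}_{a+b}$. The only other subtlety is the degenerate possibility $H=0$, which is excluded by hypothesis (and handled separately in Theorem~\ref{solution}); and the fact that $\{X_k\}$ is well defined for all $k$, which was already argued via the induction $G_k\geq 0$, $H_k\geq 0$ so that $I+G_kH_1$ is invertible. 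Once the index identity is in hand, everything else is a one-line root extraction.
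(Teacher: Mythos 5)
Your overall route is the same as the paper's: identify the accelerated iterates with a time-skipped subsequence of the fixed-point iterates via the semigroup/discrete-flow property \eqref{DF}, and then transfer the R-linear rate of Theorem~\ref{solution}. The paper's proof is exactly this reduction, asserting $\widehat{H}_k=H_{r^k}=X_{r^k}$ and concluding immediately, since the limsup of $\|X_{r^k}-X_\star\|^{1/r^k}$ along the subsequence $\{r^k\}$ is bounded by the full-sequence limsup $\beta:=\max\{|\lambda|^2:\lambda\in\sigma(T_{X_\star})\cap\mathbb{D}\}$.

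However, your final step contains a genuine error: for $\beta\in(0,1)$ and $r>1$ one has $\beta^{1/r}>\beta$ (e.g.\ $\beta=1/4$, $r=2$ gives $\beta^{1/2}=1/2$), so the chain $\limsup_k\sqrt[r^k]{\|\widehat{H}_k-X_\star\|}\le\beta^{1/r}\le\beta$ is invalid, and $\beta^{1/r}$ is a \emph{weaker}, not sharper, constant. With your bookkeeping $\widehat{\mathbb{X}}_k=\mathbb{X}_{r^{k-1}}$ (which is indeed what a literal reading of Algorithm~\ref{aa3} and \eqref{DF} gives, whereas the paper, citing its reference, uses $\widehat{H}_k=X_{r^k}$ --- an off-by-one in indexing), your argument only yields $\limsup_k\sqrt[r^k]{\|\widehat{H}_k-X_\star\|}\le\beta^{1/r}<1$. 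That still proves R-superlinear convergence of order $r$ in the sense of Definition~\ref{def:cov}, but it does not establish the specific bound $\le\beta$ stated in the theorem. To close the gap you must either prove the index identity in the paper's form $\widehat{H}_k=X_{r^k}$ (equivalently, state the rate for the shifted sequence $\widehat{H}_{k+1}=X_{r^k}$, for which $\|\widehat{H}_{k+1}-X_\star\|^{1/r^k}$ has limsup at most $\beta$), or weaken the asserted constant to $\beta^{1/r}$; as written, the concluding inequality is false and the claimed "sharper exponent $1/r$" reverses the direction of the comparison.
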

\begin{proof}
From the above discussion, the sequence $\{\mathbb{X}_{k}\}$ has the semigroup property. Thus, $\{\mathbb{X}_{k}\}$ satisfies the discrete flow property~\eqref{DF}, which together with the construction of $\widehat{H}_{k}$ lead to
{
\[
\widehat{H}_{k}={H}_{r^{k}}={X}_{r^{k}}
\]
}
 for all integers $k\geq 1$~\cite{chiang18}[Remark 4.1]. The proof is straightforward from Theorem~\ref{solution}.
\end{proof}
\subsection{Numerical examples}
In this subsection, all computations were performed in MATLAB/version 2016a on a PC with an Intel Core i5-8279U GHZ processor and 8 GB main memory, using IEEE double-precision floating-point arithmetic ( eps $=2^{-52}\approx 2.22\times 10^{-16}$).
\begin{example} Let $A=A_{1}\bigoplus A_{2}$, $G=G_{1}\bigoplus G_{2}$ and $H=H_{1}\bigoplus H_{2}$, where $(A_i,G_i,H_i)$ is a set of matrix
coefficients corresponding to a DARE~\eqref{eq:NMEP} for $i=1$ or $2$. Consider the first set of matrix coefficients $(A_1,G_1,H_1)$ depending on
four parameters and is defined as
 $A_{1}=\left(
         \begin{array}{ccc}
           \epsilon & 1 &0 \\
           0 & 0 & 0\\
           0 & 0 & 0
         \end{array}
       \right),
       G_{1} =\left(
         \begin{array}{ccc}
           g &0 & 0 \\
           0 & 0 &0 \\
           0 & 0 & 0
         \end{array}
       \right),
$ $H_{1}=\left(
         \begin{array}{ccc}
           0 & 0 & 0\\
           0 & a & b\\
           0 & \bar{b} & c
           \end{array}
       \right)$,
 where $a,c,g,\epsilon>0$ and $b\in\mathbb{C}$ satisfies $ac\geq |b|^2$.
    The second set of matrix coefficients $(A_2,G_2,H_2)$ is constructed by applying some MATLAB
functions according to the following steps
  $A_{2}=\verb!crand!(2,2)$,$G_{2}=U^\ast\verb!diag!([2,1])U$
 and $H_{2}=V^\ast\verb!diag(rand(2,1))!V$, where $U=\verb!orth(rand(2,2))!$ and $V=\verb!orth(rand(2,2))!$.

  \end{example}
It can be shown that $\rho(T_{H_1})=|\epsilon|$, $G_1H_1=0$ and $X=H_1$ solves DARE~\eqref{eq:NMEP} with coefficients $(A_1,G_1,H_1)$ and thus has a minimal positive semidefinite solution $X=X_1$ by Theorem~\ref{solution}, while applying Corollary~\ref{thm:pd1} the minimal positive semidefinite solution $X_2$ of DARE~\eqref{eq:NMEP} with coefficients $(A_2,G_2,H_2)$ exists since $G_2>$ and $\rho(T_{X_2})<1$. We conclude that $X_m=X_1\bigoplus X_2$ is the minimal positive semidefinite solution of Eq.~\eqref{eq:NMEP} with coefficients $(A,B,C)$.
\begin{figure}
\begin{center}
       \includegraphics[width=80mm]{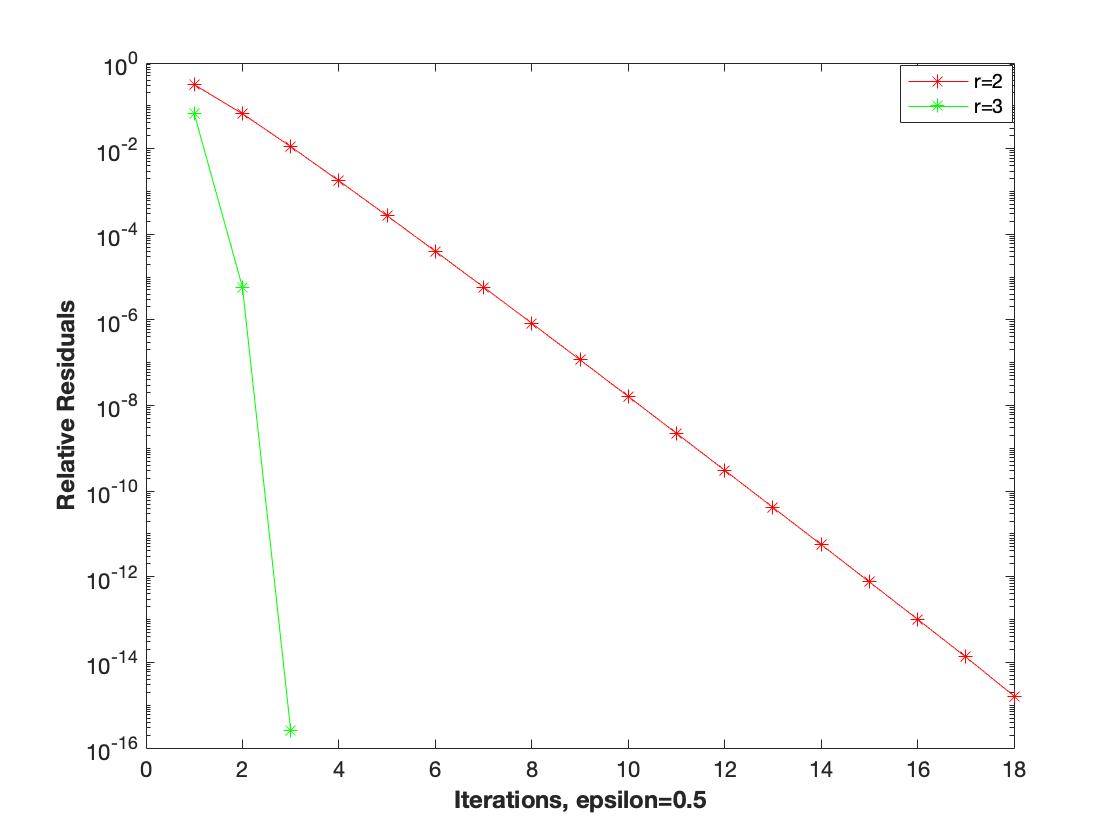}
       \includegraphics[width=80mm]{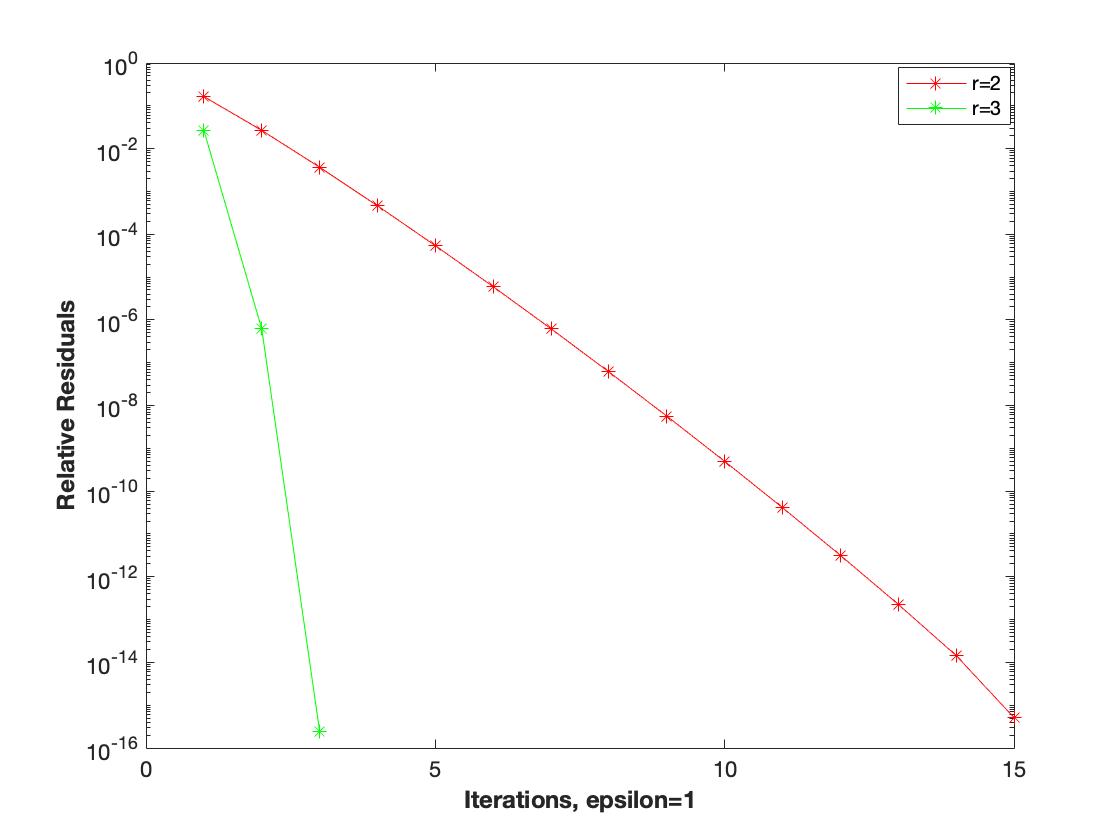}
       \includegraphics[width=80mm]{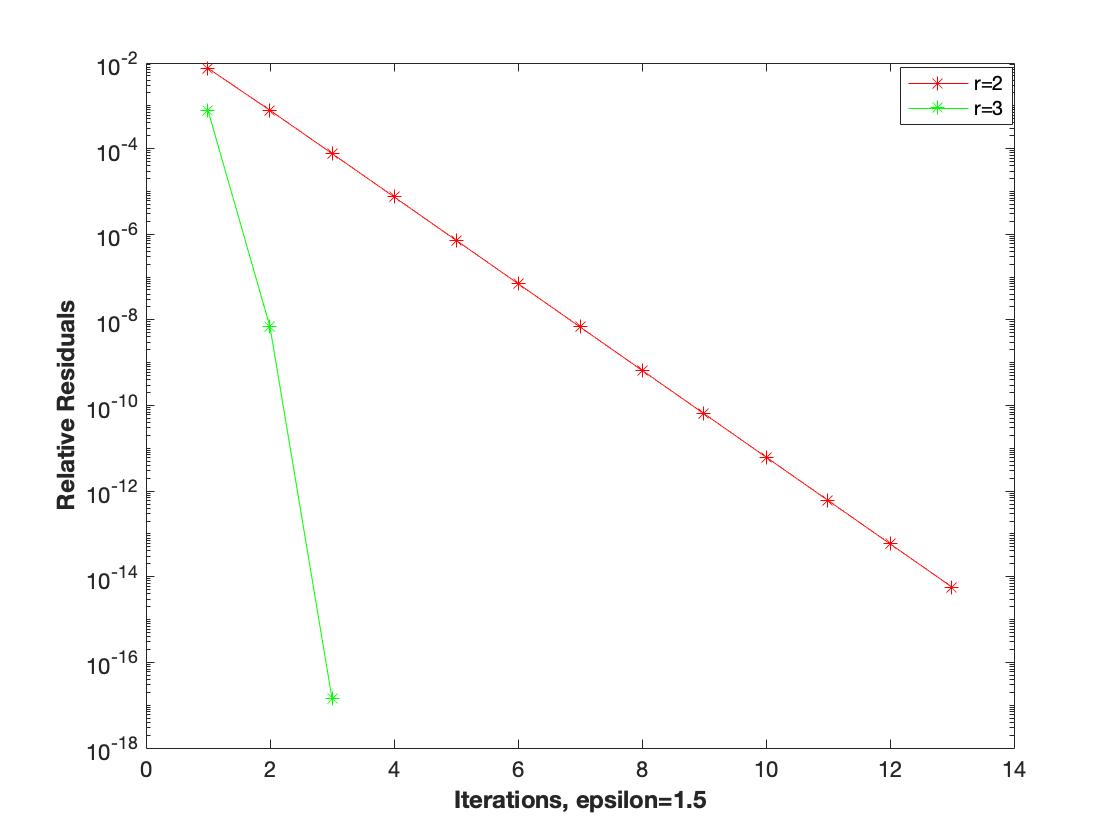}
        \end{center}
\caption{\label{epsilon}Convergence behaviour of AFPI iteration with $\epsilon=0.5, 1, 1.5$, respectively.}
\end{figure}
Now, we set $g=2$, $a=4, c=1$ and $b=\verb!crand!$. It can be easily proved that $\rho(T_{X_m})=\max(\rho(T_{X_1}),\rho(T_{X_2}))$, then $\rho(T_{X_m})=\rho(T_{X_1})=\epsilon\geq 1$ if $|\epsilon|\geq 1$ and $\rho(T_{X_m})<1$ if $|\epsilon|<1$. By choosing $\epsilon=0.5, 1, 1.5$, respectively, Figures~\ref{epsilon} show the efficiency of the accelerated algorithm with $r=2, 3$, respectively.

\begin{example}
This example is inspired by \cite{HUANG20091452}[Example 5.2]. Let 	
$$A=\left(\begin{array}{ccc}
	-1 &-1/2\\
	0 & -1\\
\end{array}\right)\bigoplus \left(\begin{array}{ccc}
	-1 & -1/2 & -1/8\\
	0 & -1 & -1/2\\
	0 & 0 & -1\\
\end{array}\right)\bigoplus \left(-\frac{2}{3}I_2\right),
$$
$$G=\left(\begin{array}{ccc}
\frac{1}{32}& \frac{1}{8}\\
\frac{1}{8}& \frac{1}{2}\\	
\end{array}\right)\bigoplus \left(\begin{array}{cccc}
\frac{1}{512}& \frac{1}{128} & \frac{1}{32}\\
\frac{1}{128} & \frac{1}{32}& \frac{1}{8}\\
\frac{1}{32} &\frac{1}{8}&\frac{1}{2}\\	
\end{array}\right) \bigoplus \left(\frac{2}{9}I_2\right),
$$ and $H=-3.5I_7$.
\end{example}
 If $r=1$, that is, the original fixed point iteration, does not converge to the solution within 1000 iterations, while for $r\geq 3$, the accelerated iteration converges to a negative solution linearly. Figure \ref{fig:neg} reports the convergence behaviour of the accelerated iteration with $r=3,4,5,6,7$.
{ In this example, $H$ is a negative definite matrix. In other words, the assumption for the positivity of $H$ is not satisfied. However, the similar convergent result of AFPI with $r\geq 3$ under weaker conditions appears in Figure~\ref{fig:neg}. By the way, the authors illustrate the superior performance of AFPI with $r=2$ as compared to NM and matrix disk function method (MDFM) in the numerical experiments (\cite{HUANG20091452}[Example 5.2]), which show that AFPI with $r=2$ perform feasibility and reliably. We believe AFPI may still converge even if $H$ is indefinite. How to apply the accelerated techniques in the work under other suitable conditions leads to the work in future.}

\begin{figure}
\begin{center}
       \includegraphics[width=90mm]{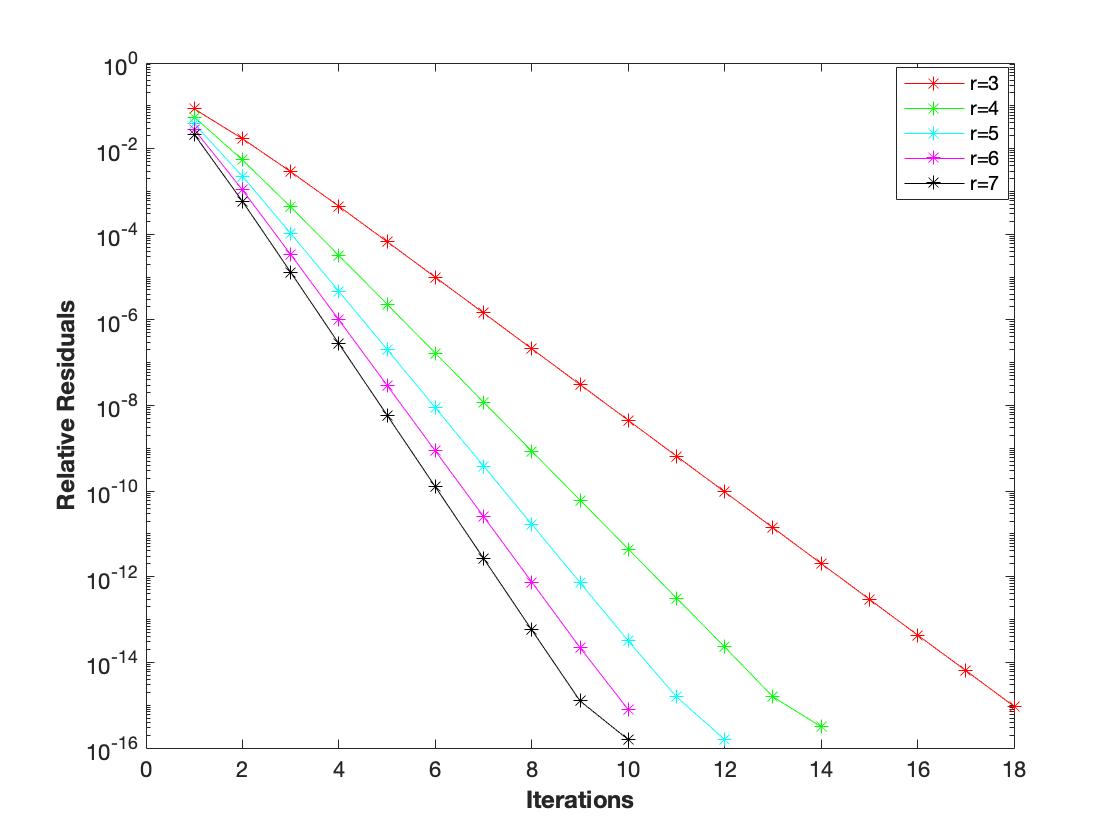}
\end{center}
\caption{\label{fig:neg}Convergence behaviour of APFI with $r=3,4,5,6,7,8$, respectively.}
\end{figure}

\section{Concluding remarks}
This paper concerns comprehensive convergence analysis of the most recent and advanced algorithms including SDA and its variants AFPI for solving DARE \eqref{eq:NMEP}. Our contribution fills in an existing gap in the minimal positive semidefinite solution $X_\star$ of the DARE~\eqref{eq:NMEP}, concerning the magnitude $\rho(T_{X_\star})\geq 1$. More precisely, we have proved the convergence for the AFPI, when the eigenvalues of $T_{X_\star}$ are inside, on or outside the open unit circle. The theoretical result is confirmed by a randomness numerical example. Consequently, our results are more general than those in the past works, which considered only eigenvalues lies in the closed unit disk. The techniques of Proposition 2.1 and Theorem~\ref{solution} can be adopted in the convergence analysis of AFPI. We believe the results we obtain are novel {on} this topic and could provide considerable insights into the study of other nonlinear matrix equations.

\section*{Acknowledgment}
The author wish to thank Dr.Jie Meng and four anonymous referees for many interesting and valuable suggestions on the manuscript.
This research work is partially supported by the Ministry of Science and Technology and the National Center for Theoretical Sciences in Taiwan.
The author would like to thank the support from the Ministry of Science and Technology of Taiwan under the grant MOST 108-2115-M-150-002.
\section*{Appendix : An alternating iteration}
\begin{proof}
\begin{enumerate}
  \item First, for a positive definite matrix $Q$ it is fairly easy to see that $E_k:=\sum\limits_{j=0}^{k-1} (A^k)^\ast Q A^k$ is a Cauchy sequence if and only if $\rho(A)<1$.

   Assume that $S_A(X_0)>0$ for some $X_0>0$.  Then, there exists a positive number $\epsilon$ such that $X_0> \epsilon I_n+A^\ast X_0 A\geq\epsilon \sum\limits_{j=0}^k (A^j)^\ast A^j$ for any positive integer $k$. It immediately implies that $\rho(A)<1$. Conversely, $X=X_0:=\sum\limits_{j=0}^\infty (A^k)^\ast Q A^k>0$ solves the equation $S_A(X)=Q$ if $\rho(A)<1$, where $Q$ is any positive definite matrix.
  \item  Assume that there exists a $X\in \mathbb{P}_n$ such that $Q:= S_A(X)\geq 0$. It can be shown that there exists a nonsingular matrix $S\in\mathbb{C}^{n\times n}$ such that $S^\ast X S=I_n$ and $S^\ast Q S=Q_r\oplus 0_{n-r}$ with a $r\times r$ positive diagonal matrix $Q_r$~\cite{Bernstein2009}[Theorem 8.3.1]. We transform \eqref{SME} into the following equation
 \[
 0\leq \widehat{A}^\ast\widehat{A}=(I_r-Q_r)\oplus I_{n-r} \leq I_n,
 \]
 where $\widehat{A}=S^{-1}A S$. It is immediately that 
 $\widehat{A}$ is semicontractive~\cite{Bernstein2009}[Definition 3.1.2] and thus $A$ is discrete-time Lyapunov stable~\cite{Bernstein2009}[Fact 11.21.4]. Namely, $\rho ({A}) \leq 1$, and if $\rho ({A}) = 1$, then all unimodular eigenvalues of ${A}$ is semisimple.

Conversely, if there exists nonsingular matrix $P$ such that  $PA P^{-1}=J_s\oplus J_1$ where $J_s$ is a Jordan canonical form of $k\times k$ matrix satisfying $\rho(J_s)<1$ and $J_1$ is a $(n-k)\times (n-k)$ diagonal matrix satisfying $\rho(J_1)=1$. Let $\widehat{X}:= P^\ast X P$ and $\widehat{Q}:= P^\ast Q P$. We partition two matrices
$\widehat{X}$ and $\widehat{Q}$ as $2\times 2$ block matrices $\bb \widehat{X}_{i,j} \eb$ and $\bb \widehat{Q}_{i,j} \eb$, respectively, where $\widehat{X}_{1,1},\widehat{Q}_{1,1}\in\mathbb{C}^{k\times k}$ and $\widehat{X}_{2,2},\widehat{Q}_{2,2}\in\mathbb{C}^{(n-k)\times (n-k)}$. We have
\begin{subequations}\label{AP}
 \begin{align}
  \widehat{X}_{1,1} &=\widehat{Q}_{1,1}+J_s^\ast\widehat{X}_{1,1} J_s,\label{AP1}\\
  \widehat{X}_{2,1} &=\widehat{Q}_{2,1}+J_s^\ast\widehat{X}_{2,1} J_1,\label{AP2} \\
  \widehat{X}_{2,2} &=\widehat{Q}_{2,2}+J_1^\ast\widehat{X}_{2,2} J_1.\label{AP3}
\end{align}
\end{subequations}
It is obtained $\widehat{Q}=I_k\oplus 0_{n-k}$  by choosing $Q=P^{-H}(I_k\oplus 0_{n-k})P^{-1}$. It is easily to check that there exist two unique solution $\widehat{X}_{11}\in\mathbb{P}_n$ and $\widehat{X}_{21}=0_{k\times(n-k)}\in\mathbb{C}^{k\times(n-k)}$ to the corresponding Stein matrix equations \eqref{AP1} and \eqref{AP2}. Let
$\widehat{X}_{22}$ be a diagonal matrix whose diagonal elements are all nonnegative. Then, the matrix $X=P^{-H}(\widehat{X}_{11}\oplus\widehat{X}_{22})P^{-1}\in\mathbb{N}_n$ satisfies $S_A(X)=Q\geq 0$. Moreover, a positive definite matrix $X$ can be chosen by setting $\widehat{X}_{22}>0$.
\end{enumerate}
\end{proof}
%

%
%

\def\cprime{$'$}

\end{document}